\documentclass[12pt]{amsart}
\usepackage{amscd,amsmath,amsthm,amssymb,graphics}
\usepackage{pstcol,pst-plot,pst-3d}
\usepackage{lmodern,pst-node}
\usepackage[dvips]{graphicx}
\usepackage{multicol}
\usepackage{epic,eepic}
\usepackage{amsfonts,amssymb,amscd,amsmath,enumitem,verbatim}
\psset{unit=0.7cm,linewidth=0.8pt,arrowsize=2.5pt 4}

\newpsstyle{fatline}{linewidth=1.5pt}
\newpsstyle{fyp}{fillstyle=solid,fillcolor=verylight}
\definecolor{verylight}{gray}{0.97}
\definecolor{light}{gray}{0.9}
\definecolor{medium}{gray}{0.85}
\definecolor{dark}{gray}{0.6}

\usepackage{lineno}



\unitlength=0.7cm

%
%

%
%
\def\frk{\frak}               

\def\mm{{\frk m}}

\def\Phi{{\frk n}}
\def\Phi{{\frk N}}
%

\def\MS{{\mathcal S}}

\def\MC{{\mathcal C}}

%

\def\zb{{\bold z}}

\def\opn#1#2{\def#1{\operatorname{#2}}} 
%
\opn\chara{char} \opn\length{\ell} \opn\pd{pd} \opn\rk{rk}
\opn\projdim{proj\,dim} \opn\injdim{inj\,dim} \opn\rank{rank}
\opn\depth{depth} \opn\grade{grade} \opn\height{height}
\opn\embdim{emb\,dim} \opn\codim{codim}

\opn\Tr{Tr} \opn\bigrank{big\,rank}
\opn\superheight{superheight}\opn\lcm{lcm}
\opn\trdeg{tr\,deg}
\opn\reg{reg} \opn\lreg{lreg} \opn\ini{in} \opn\lpd{lpd}
\opn\size{size}\opn\bigsize{bigsize}
\opn\cosize{cosize}\opn\bigcosize{bigcosize}
\opn\sdepth{sdepth}\opn\sreg{sreg}
\opn\link{link}\opn\fdepth{fdepth}
%
\opn\div{div} \opn\Div{Div} \opn\cl{cl} \opn\Cl{Cl}
%

\let\epsilon\varepsilon
\let\phi=\varphi
\let\kappa=\varkappa
%
\opn\Spec{Spec} \opn\Supp{Supp} \opn\supp{supp} \opn\Sing{Sing}
\opn\Ass{Ass} \opn\Min{Min}\opn\Mon{Mon} \opn\dstab{dstab} \opn\astab{astab}
\opn\Syz{Syz}
%
%
\opn\Ann{Ann} \opn\Rad{Rad} \opn\Soc{Soc}
%
%
\opn\Im{Im} \opn\Ker{Ker} \opn\Coker{Coker} \opn\Am{Am}
\opn\Hom{Hom} \opn\Tor{Tor} \opn\Ext{Ext} \opn\End{End}
\opn\Aut{Aut} \opn\id{id}

\opn\nat{nat}
\opn\pff{pf}
\opn\Pf{Pf} \opn\GL{GL} \opn\SL{SL} \opn\mod{mod} \opn\ord{ord}
\opn\Gin{Gin} \opn\Hilb{Hilb}\opn\sort{sort}
\opn\initial{init}
\opn\ende{end}
\opn\height{height}
\opn\type{type}
\opn\set{set}
\opn\indeg{indeg}
%
%
\opn\aff{aff} \opn\con{conv} \opn\relint{relint} \opn\st{st}
\opn\lk{lk} \opn\cn{cn} \opn\core{core} \opn\vol{vol}
\opn\link{link} \opn\star{star}\opn\lex{lex}
\opn\gr{gr}

%
%

\def\pot#1#2{#1[\kern-0.28ex[#2]\kern-0.28ex]}

%
%
\opn\dirlim{\underrightarrow{\lim}}
\opn\inivlim{\underleftarrow{\lim}}
%
%
%

%
%
\let\to=\rightarrow

\def\Implies{\ifmmode\Longrightarrow \else
        \unskip${}\Longrightarrow{}$\ignorespaces\fi}
\def\implies{\ifmmode\Rightarrow \else
        \unskip${}\Rightarrow{}$\ignorespaces\fi}
\def\iff{\ifmmode\Longleftrightarrow \else
        \unskip${}\Longleftrightarrow{}$\ignorespaces\fi}

\let\:=\colon
 \theoremstyle{plain}
\newtheorem{Theorem}{Theorem}[section]
 \newtheorem{Lemma}[Theorem]{Lemma}
 \newtheorem{Corollary}[Theorem]{Corollary}
 \newtheorem{Proposition}[Theorem]{Proposition}

 \theoremstyle{definition}
 \newtheorem{Definition}[Theorem]{Definition}

%
%
\let\epsilon\varepsilon
\let\kappa=\varkappa
%
%
\textwidth=15cm \textheight=22cm \topmargin=0.5cm
\oddsidemargin=0.5cm \evensidemargin=0.5cm \pagestyle{plain}
%
%
%
\opn\dis{dis}
\def\pnt{{\raise0.5mm\hbox{\large\bf.}}}

\opn\Lex{Lex}



\begin{document}
\title{Licci binomial edge ideals}
\author {Viviana Ene, Giancarlo Rinaldo, Naoki Terai}

\address{Viviana Ene, Faculty of Mathematics and Computer Science, Ovidius University, Bd. Mamaia 124, 900527 Constanta, Romania}  \email{vivian@univ-ovidius.ro}

\address{Giancarlo Rinaldo, Department of Mathematics,  University of Trento, Via Sommarive,
14  38123 Povo (Trento), Italy}  \email{giancarlo.rinaldo@unitn.it}

\address{Naoki Terai, Faculty of Education, Saga University, Saga 840-8502, Japan}  \email{terai@cc.saga-u.ac.jp}

\thanks{The second author was supported by GNSAGA of INdAM (Italy). The third author was supported by the JSPS Grant-in Aid for Scientific Research (C)  18K03244. }

\begin{abstract}
We give a complete characterization of graphs whose binomial edge ideal is licci. An important tool is a new general upper bound for the regularity of binomial edge ideals. 
\end{abstract}

\subjclass[2010]{14M06, 13C40, 13H10, 05E40,}
\keywords{licci ideals, binomial edge ideals, regularity}

\maketitle
\section*{Introduction}
\label{Intro}

Binomial edge ideals associated to simple graphs have been intensively studied in the last decade. Their algebraic and homological properties are intimately related to the combinatorics of the underlying graph. A lot of effort has been dedicated to 
study the Cohen-Macaulay property of these ideals. As in the case of classical edge ideals, an exhaustive classification of graphs whose binomial edge ideals are Cohen-Macaulay seems to be a hopeless task. There are successful attempts to characterize graphs with specific  properties which have Cohen-Macaulay binomial edge ideals. For example, the Cohen-Macaulay   property of binomial edge ideals is known for block  graphs which include the  trees
\cite{EHH} and  for bipartite graphs \cite{BMS}. We refer also to the papers \cite{KiMa2, RaRi, Rin, Rin2} for other classes of Cohen-Macaulay binomial edge ideals. 

Let $G$ be a simple graph (that is, undirected, with no loops, and no multiple edges) on the vertex set $[n]:=\{1,2,\ldots,n\}$ and $S=K[x_1,\ldots,x_n,y_1,\ldots y_n]$ the polynomial ring in $2n$ variables. The binomial edge ideal $J_G\subset S$ of $G$ is generated by all the binomials of the form $f_{ij}=x_iy_j-x_jy_i$ where 
$\{i,j\}$ is an edge of $G.$ In other words, $J_G$ is generated by  the $2$-minors of the generic matrix $X=\left(
\begin{array}{cccc}
	x_1 & x_2 & \ldots & x_n\\
	y_1 & y_2 & \ldots & y_n
\end{array}\right)
$ which correspond to the edges of $G.$

In this paper, we study  binomial edge ideals which are in the linkage class of a  complete intersection. We call such ideals licci, in brief.  Besides the Cohen-Macaulay property, they satisfy some extra conditions which make possible a full characterization of graphs whose binomial edge ideals are licci. Linkage theory has a rich history in commutative algebra and algebraic geometry.  Peskine and Szpiro \cite{PeSz} in 1974 reduced general linkage to  questions on ideals over commutative algebras and after then, a lot of work has been done to develop this theory in commutative algebra and algebraic geometry. If $I,J$ are proper ideals in a local regular ring $R,$ they are called \emph{directly linked} and we write $I\sim J$ if there exists a regular sequence 
$\zb =z_1,\ldots,z_g$ in $I\cap J$ such that $J=(\zb):I$ and $I=(\zb):J.$ One says that  $I$ and $J$ belong to the same \emph{linkage class} if there exists a sequence of direct links 
\[
I=I_0 \sim I_1\sim \cdots \sim I_m=J.
\] If $J$ is a complete intersection ideal, then $I$ is said to be licci. The ideals in the same linkage class share several properties. For example, if $I$ and $J$ are linked, then $I$ is Cohen-Macaulay if and only if $J$ is Cohen-Macaulay. In particular, it follows that 
a licci ideal is Cohen-Macaulay. 

The following natural question arises. May we give a full characterization of the graphs $G$ with the property that the associated binomial edge ideal is licci?

In this paper, we give a complete answer to this question. In \cite{HuUl} a necessary condition for a Cohen-Macaulay  homogeneous ideal in a polynomial ring to be licci is given. In the case of binomial edge ideals, this condition implies that if $(J_G)_\mm\subset S_\mm$ (here $\mm$ is the maximal graded ideal of the ring $S$) is licci, then 
$\reg(S/J_G)\geq n-2.$ This condition turns to be also sufficient for Cohen-Macaulay binomial edge ideals as we are going to show in this paper.  

The regularity of binomial edge ideals have been intensively studied in the last years. In \cite{MaMu} it was proved that the regularity of $S/J_G$ is upper bounded by $n-1$ and it was conjectured that this upper bound is attained if and only if $G$ is a path graph. This conjecture was later proved in \cite{KiMa}. Inspired by the paper \cite{KiMa},  we prove a new upper bound for $\reg(S/J_G)$ which is stronger than $n-1$ and it plays an essential role in the characterization of the graphs $G$ whose binomial edge ideal is licci. 

The structure of the paper is as follows. In Section~\ref{Prelim},  we recall the basic results on licci and binomial edge ideals needed in the next sections. In Section~\ref{Bound}, we prove that if $G$ is a connected graph, then $\reg(S/J_G)\leq n-\dim \Delta(G),$ where 
$\Delta(G)$ is the clique complex of $G$ (Theorem~\ref{NaokiThm}). We believe that this new general upper bound for the regularity of binomial edge ideals will inspire new results on their resolution. In brief, in Theorem~\ref{NaokiThm}, we prove that for every clique 
$W\subset [n]$ of the connected graph $G,$ we have $\reg(S/J_G)\leq n-|W|+1.$ The proof is based on a double induction. First we make induction on $n-|W|$ and, secondly, on a combinatorial invariant of $G.$ 

The characterization of graphs whose binomial edge ideal is licci is given in Section~\ref{Liccisec}. In Theorem~\ref{thm:licci} we show that, for a connected graph $G$ on $n$ vertices, the following statements are equivalent:
\begin{itemize}
	\item [(i)]  $(J_G)_{\mm}\subset S_\mm$ is licci.
	\item [(ii)] $J_G$ is  Cohen-Macaulay and $n-2\leq \reg(S/J_G)\leq n-1.$
	\item [(iii)] $G$ is a path graph or it is a triangle with possibly some paths attached to some of its vertices. 
\end{itemize}
The most technical part in the proof is to show that there is no indecomposable graph $G$ with $n\geq 4$ vertices with 
$\reg(S/J_G)=n-2$ and $J_G$  Cohen-Macaulay. In order to make this part easier to understand, we proved some preparatory lemmas. We can reformulate the above statement by saying that the only indecomposable graphs $G$ with $J_G$ a Cohen-Macaulay ideal and $\reg(S/J_G)=n-2$ are the path with one edge and the triangle. Next we combine this fact with Lemma~\ref{lm:decompose} which shows that for any decomposable graph $G$ with $\reg(S/J_G)=n-2,$ one of the components must be a path. In this way we derive the combinatorial characterization from
Theorem~\ref{thm:licci} (iii).

A straightforward consequence of Theorem~\ref{thm:licci} is Corollary~\ref{cor:bipartite} which says that  for a connected bipartite graph $G$, the ideal $(J_G)_{\mm}\subset S_\mm$ is licci if and only if $G$ is a path graph. The case when $G$ is a disconnected graph is treated in Proposition~\ref{pr:disconnect}.

In the last section of the paper, we show that for chordal graphs,  in  the  equivalent statements of Theorem~\ref{thm:licci}, we may replace the Cohen-Macaulay property with the unmixedness of the ideal $J_G$ (Theorem~\ref{thm:chordal}). For the proof we use a theorem of Dirac which characterizes the chordal graphs in terms of their clique complex.

\section{Preliminaries}
\label{Prelim}

We recall some notions and fundamental results needed  in the later sections.

\subsection{Licci ideals}

Let $R$ be a  regular local ring and $I,J$ proper ideals of $R.$ Then $I$ and $J$ are called \emph{directly linked} and we write $I \sim  J$ if there exists a regular sequence 
$\zb =z_1,\ldots,z_g$ in $I\cap J$ such that $J=(\zb):I$ and $I=(\zb):J.$ One says that $I$ is \emph{linked} to $J$ or that $I$ and $J$ belong to the same \emph{linkage class} if there exists a sequence of direct links 
\[
I=I_0 \sim I_1\sim \cdots \sim I_m=J.
\] If $J$ is a complete intersection ideal, that is, it is generated by a regular sequence, then $I$ is said to be in the \textbf{li}nkage \textbf{c}lass of a \textbf{c}omplete \textbf{i}ntersection 
(\textit{licci }in brief). 



Several properties are preserved in the same linkage class. For example, if $I$ is linked to  $J$, then $R/I$ is Cohen-Macaulay if and only if $R/J$ is Cohen-Macaulay \cite{PeSz}.
In particular, any licci ideal is Cohen-Macaulay. A necessary condition for a homogeneous ideal in a polynomial ring to be licci is given in \cite{HuUl}.

\begin{Theorem}\cite[Corollary 5.13]{HuUl}\label{HUineq} Let  $I$ be  a Cohen-Macaulay homogeneous ideal  in a standard graded polynomial ring $S=K[x_1,\ldots,x_n]$ with the graded maximal ideal $\mm.$ If $I_{\mm}\subset R=S_{\mm}$ is licci, then
\begin{equation}\label{HUineqeq}
\reg(S/I)\geq (\height I-1)(\indeg I-1)
\end{equation}
where $\indeg I$ is the initial degree of the ideal $I,$ that is, $\indeg I=\min\{i: I_i\neq 0\}.$
\end{Theorem}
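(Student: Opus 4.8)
The plan is to translate the statement into the behaviour of a single numerical invariant along a chain of links, and then induct on the length of a linkage chain joining $I$ to a complete intersection. The two pillars are the local-duality dictionary between regularity and the canonical module, and the self-duality of linkage. For the first, recall that for a graded Cohen--Macaulay quotient $S/I$ of codimension $g=\height I$, with canonical module $\omega_{S/I}=\Ext^{g}_{S}(S/I,S(-n))$, graded local duality yields
\[
\reg(S/I)=\dim(S/I)-\indeg(\omega_{S/I}).
\]
For the second, if $\zb=z_1,\dots,z_g\in I$ is a regular sequence of forms with $\deg z_i=a_i$ and $I'=(\zb):I$ is the linked ideal, then passing through the Gorenstein quotient $S/(\zb)$ gives $\omega_{S/I}\cong (I'/(\zb))\bigl(\sum_{i=1}^{g}a_i-n\bigr)$. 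Combining the two displays produces the working identity
\[
\reg(S/I)=\sum_{i=1}^{g}(a_i-1)-\indeg\bigl(I'/(\zb)\bigr),
\]
which I would carry through the induction.

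With this in hand I would argue by induction on the length $m$ of a shortest chain $I=I_0\sim I_1\sim\cdots\sim I_m$ with $I_m$ a complete intersection; such a chain exists precisely because $I$ is licci, and each $I_j$ is again licci with $\height I_j=g$. For the base case $m=0$, a complete intersection with generator degrees $b_1\le\cdots\le b_g$ has $\indeg I=b_1$ and $\reg(S/I)=\sum_{i}(b_i-1)\ge g(b_1-1)\ge(g-1)(\indeg I-1)$, so \eqref{HUineqeq} holds with room to spare.

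For the inductive step I would choose the first link economically: set $I':=I_1$ and take $\zb\subseteq I$ to be $g$ sufficiently general forms of degree $d:=\indeg I$ (harmlessly extending $K$ to an infinite field, and using that in the relevant situation $I$ carries at least $g$ minimal generators in its initial degree). Then $\sum_i(a_i-1)=g(d-1)$, the working identity reads $\reg(S/I)=g(d-1)-\indeg(I'/(\zb))$, and the desired inequality \eqref{HUineqeq} becomes \emph{equivalent} to the bound
\[
\indeg\bigl(I'/(\zb)\bigr)\le d-1 ,
\]
that is, to the assertion that the linked ideal acquires a form of degree strictly below $d$ modulo $(\zb)$. (If fewer than $g$ minimal generators of $I$ sit in degree $d$, some $a_i$ must be taken larger, which only enlarges $\sum_i(a_i-1)$; I would absorb this into the same estimate.)

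The main obstacle is exactly this bound $\indeg(I'/(\zb))\le d-1$, and it is where being licci is indispensable: a generic link exists for \emph{every} Cohen--Macaulay $I$, whereas \eqref{HUineqeq} fails for some non-licci ideals, so the bound cannot hold on formal grounds. The idea is to read the minimal generators of $I'$ off the resolution of $S/I'$ obtained as the mapping cone of the dual of the Koszul complex on $\zb$ mapping into the dual of the minimal free resolution of $S/I$, and then to invoke the structure results of \cite{HuUl}, together with the inductive hypothesis applied to the closer ideal $I'=I_1$, to constrain those generator degrees. It is this constraint on the degrees of a link inside a licci chain---unavailable for general Cohen--Macaulay ideals---that forces the initial degree to drop enough, and pinning it down is the step I expect to be hardest. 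As a sanity check, for $d=\indeg I=2$ the inequality specializes to $\reg(S/I)\ge\height I-1$, which is precisely the form in which it is applied to binomial edge ideals.
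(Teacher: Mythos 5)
First, a point of comparison: the paper does not prove this statement at all — it is imported verbatim as \cite[Corollary 5.13]{HuUl} — so your attempt has to stand on its own. Your reductions are correct and worth keeping: for $g=\height I$ and a homogeneous link $I'=(\zb):I$ with $\deg z_i=a_i$, the duality formula $\reg(S/I)=\dim(S/I)-\indeg(\omega_{S/I})$ and the identification $\omega_{S/I}\cong (I'/(\zb))(\sum_i a_i-n)$ are standard and correctly assembled into the identity $\reg(S/I)=\sum_i(a_i-1)-\indeg(I'/(\zb))$, which does reduce \eqref{HUineqeq} to the bound $\indeg(I'/(\zb))\le\sum_i(a_i-1)-(g-1)(d-1)$ with $d=\indeg I$. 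But that bound \emph{is} the theorem, and you do not prove it: you defer it to ``the structure results of \cite{HuUl}'', which is circular, since the statement being proved is one of those structure results.

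Two concrete reasons the induction as set up cannot close. (1) The inductive hypothesis controls the wrong quantity: applied to $I'=I_1$ it gives $\reg(S/I')\ge(g-1)(\indeg I'-1)$, which by the same working identity (linkage being an involution here) is an upper bound on $\indeg(I/(\zb))$, not on $\indeg(I'/(\zb))$; nothing in your argument relates these two initial degrees. (2) The hypothesis is that $I_\mm$ is licci in the local ring $S_\mm$, so the given chain $I_0\sim\cdots\sim I_m$ may pass through non-homogeneous ideals linked by non-homogeneous regular sequences; when you replace the first link by a general homogeneous link of pure degree $d$, the resulting $I'$ need not coincide with the given $I_1$, and the assertion that it still admits a chain of length $m-1$ to a complete intersection (or is even licci) is itself a nontrivial theorem of Huneke--Ulrich, proved via generic linkage and deformation — not something available to run a naive induction on $m$. (There is also the minor unaddressed point of whether $I$ contains a regular sequence of $g$ forms all of degree $d$.) In short, the skeleton is a reasonable way to \emph{organize} a proof, but the load-bearing step — why licci-ness forces the linked ideal to acquire elements of degree below $d$, which is precisely what fails for, say, many general points in $\PP^3$ — is missing, and it is the whole difficulty.
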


Although, in general, inequality (\ref{HUineqeq}) is not a sufficient condition, if $I$ is the edge ideal of a graph, then $I_{\mm}\subset R=S_{\mm}$ is licci if and only if inequality (\ref{HUineqeq})
holds \cite{KTY}. We will see a similar behavior in Section~\ref{Liccisec} for binomial edge ideals.
 
\subsection{Graphs and binomial edge ideals}

Let $G$ be a simple graph on the vertex set $V(G):=[n]$ with the edge set $E(G)$ and $S=K[x_1,\ldots,x_n,y_1,\ldots y_n]$  the polynomial ring in $2n$ variables over a field $K.$
The binomial edge ideal of the graph $G$ is generated by the binomials $f_{e}:=x_iy_j-x_jy_i$ with $e=\{i,j\}\in E(G).$ In other words, $J_G$ is generated by the $2$-minors of the matrix 
$X=\left(
\begin{array}{cccc}
	x_1 & x_2 & \ldots & x_n\\
	y_1 & y_2 & \ldots & y_n
\end{array}\right)
$ which correspond to the edges of $G.$ For example, if $G$ is the complete graph $K_n$ on $n$ vertices, then $J_G$ is the ideal $I_2(X)$ generated by all the $2$-minors of $X,$ while 
if $G$ is the path graph $P_n$ on $n$ vertices with edge set $\{\{i,i+1\}:1\leq i\leq n-1\},$ then $J_G$ is the ideal of all adjacent maximal  minors of $X.$

The binomial edge ideals were introduced independently in the papers \cite{HHHKR} and \cite{Ohtani}. In the last decade,  these ideals have been studied by many authors. The interested  
reader may find a thorough introduction to this topic in the monograph \cite{HHO}. Fundamental results regarding the minimal free resolutions of binomial edge ideals are surveyed in \cite{Sara}.

In this paper, we need to recall the primary decomposition of binomial edge ideals and some fundamental results on their regularity.

The minimal primary decomposition of a binomial edge ideal is strongly related to the combinatorics of the underlying graph; see \cite{HHHKR} or \cite[Chapter 7]{HHO}. Let $\MS$ be a (possibly empty) subset of $[n]$ and let $G_{\MS}$ be the restriction of $G$ to the vertex subset 
$[n]\setminus \MS.$ Let  $G_1,\ldots,G_{c(\MS)}$ be the connected components of this restriction  and, for every $1\leq i\leq c(\MS),$ let $\tilde{G_i}$ be the complete graph on 
$V(G_i).$ Then, the ideal \[P_{\MS}(G)=(\{x_i,y_i:i\in \MS\})+J_{\tilde{G_1}}+\cdots +J_{\tilde{G}_{c(\MS)}}\] is a prime ideal in $S$ which contains $J_G$, and by  \cite[Lemma 3.1]{HHHKR} we have  
\begin{equation}\label{eq:ht}
\height(P_{\MS}(G))=n-c(\MS)+|\MS|.
\end{equation}

\begin{Theorem}\cite{HHHKR}\label{PrimeDecomposition}
In the above notation, we have
\[J_G=\bigcap_{\MS\subset [n]}P_{\MS}(G).\]
\end{Theorem}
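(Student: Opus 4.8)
Since each $P_{\MS}(G)$ is prime and contains $J_G$ (as recalled just above), the inclusion $J_G\subseteq\bigcap_{\MS\subseteq[n]}P_{\MS}(G)$ is immediate. The whole content is therefore the reverse inclusion, and my plan is to deduce it from two facts: that every minimal prime of $J_G$ occurs among the $P_{\MS}(G)$, and that $J_G$ is radical. Granting both, $J_G=\bigcap_{P\in\Min(J_G)}P=\bigcap_{\MS}P_{\MS}(G)$, where the first equality is radicality and the second follows once we know $\Min(J_G)\subseteq\{P_{\MS}(G):\MS\subseteq[n]\}$ together with $J_G\subseteq P_{\MS}(G)$ for all $\MS$ (redundant $P_{\MS}(G)$ only enlarge the family, not the intersection).

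To identify the minimal primes, take $P\in\Min(J_G)$ and set $\MS=\{i\in[n]:x_i\in P\text{ and }y_i\in P\}$. I would show $P_{\MS}(G)\subseteq P$; since $J_G\subseteq P_{\MS}(G)\subseteq P$ and $P$ is minimal over $J_G$, this forces $P=P_{\MS}(G)$. The containment reduces to checking $f_{ij}\in P$ for all $i,j$ in a common connected component of $G_{\MS}$, which I would prove by induction on the length of a path $i=v_0,v_1,\dots,v_r=j$ inside that component; the base case is that each edge-binomial lies in $J_G$. For the inductive step, with $\ell=v_{r-1}$, the identities
\[
x_\ell f_{ij}=x_i f_{\ell j}-x_j f_{\ell i},\qquad y_\ell f_{ij}=y_i f_{\ell j}-y_j f_{\ell i}
\]
put $x_\ell f_{ij}$ and $y_\ell f_{ij}$ into $J_G\subseteq P$, since $f_{\ell j}\in J_G$ (an edge) and $f_{\ell i}\in P$ (the shorter path). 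As $\ell\notin\MS$, one of $x_\ell,y_\ell$ lies outside the prime $P$, so $f_{ij}\in P$. This yields $\sqrt{J_G}=\bigcap_{\MS}P_{\MS}(G)$ unconditionally.

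It remains to upgrade this to an equality of ideals by proving $J_G$ radical, and I expect this to be the main obstacle. The route I would take is to produce a Gröbner basis of $J_G$ with squarefree initial ideal for a suitable lexicographic term order: one adjoins to the generators $f_{ij}$ the binomials associated to the admissible paths of $G$, and checks that all $S$-pairs reduce to zero and that the resulting leading terms are squarefree. A squarefree monomial initial ideal is radical, whence $J_G$ is radical and $J_G=\sqrt{J_G}=\bigcap_{\MS}P_{\MS}(G)$. The combinatorial bookkeeping of the admissible-path generators and the Buchberger reduction is the technical heart; once radicality is secured, the rest of the argument is formal.
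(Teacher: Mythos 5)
The paper offers no proof of this statement; it is quoted from \cite{HHHKR}, so the only meaningful comparison is with the argument in that source. Your outline reproduces it faithfully. The reduction is right: the inclusion $J_G\subseteq\bigcap_{\MS}P_{\MS}(G)$ is immediate, and the reverse inclusion follows once one knows (a) every minimal prime of $J_G$ is some $P_{\MS}(G)$ and (b) $J_G$ is radical. Your treatment of (a) is complete and correct: for $P\in\Min(J_G)$ and $\MS=\{i: x_i,y_i\in P\}$, the generators $x_i,y_i$ ($i\in\MS$) of $P_{\MS}(G)$ lie in $P$ by construction, and the identities $x_\ell f_{ij}=x_if_{\ell j}-x_jf_{\ell i}$ and $y_\ell f_{ij}=y_if_{\ell j}-y_jf_{\ell i}$ do check out, so the induction along a path in $G_{\MS}$ (where every intermediate vertex $\ell$ satisfies $x_\ell\notin P$ or $y_\ell\notin P$) gives $f_{ij}\in P$ for $i,j$ in a common component; minimality then forces $P=P_{\MS}(G)$. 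This is exactly the mechanism in \cite{HHHKR}. The one part you do not actually carry out is (b): asserting that the edge binomials together with the admissible-path binomials form a Gr\"obner basis with squarefree initial ideal is the correct route (it is Theorem 2.1 and Corollary 2.2 of \cite{HHHKR}), but the Buchberger verification is the genuine technical core of the whole theorem, and your proposal acknowledges rather than supplies it. So: right strategy throughout, with the minimal-prime half fully proved and the radicality half correctly identified but left as an honest black box.
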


In particular, $J_G$ is a radical ideal and its minimal prime ideals are among $P_{\MS}(G)$ with $\MS\subset [n].$  The following proposition characterizes the sets $\MS$ for which the prime ideal $P_{\MS}(G)$ is minimal.

\begin{Proposition}\label{cpset}\cite[Corollary 3.9]{HHHKR}\label{PrimeDivisor}
$P_{\MS}(G)$ is a minimal prime ideal of $J_G$ if and only if either $\MS=\emptyset$ or $\MS$ is non-empty and for each $i\in \MS,$ $c(\MS\setminus\{i\})<c(\MS)$.
\end{Proposition}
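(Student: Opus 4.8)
The plan is to read off the minimal primes from Theorem~\ref{PrimeDecomposition}. Since $J_G=\bigcap_{\MS}P_{\MS}(G)$ is a \emph{finite} intersection of primes, $J_G$ is radical and its minimal primes are exactly the inclusion-minimal members of the family $\{P_{\MS}(G)\}_{\MS\subset[n]}$ (any prime containing the intersection contains one of the $P_{\MS}(G)$). Thus the whole problem becomes the analysis of the inclusion order on this family. I would first record the combinatorial meaning of the stated condition: $c(\MS\setminus\{i\})<c(\MS)$ holds precisely when deleting $i$ from the component of $G_{\MS\setminus\{i\}}$ containing it breaks that component into at least two pieces, i.e.\ when $i$ is a \emph{cut vertex} of $G_{\MS\setminus\{i\}}$. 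So the assertion to prove is: $P_{\MS}(G)$ is minimal if and only if every $i\in\MS$ is a cut vertex of $G_{\MS\setminus\{i\}}$ (vacuously true when $\MS=\emptyset$).

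The first technical ingredient is a lemma on degree-one parts. Each $P_{\MS}(G)$ is homogeneous, and its degree-one component is $\mathrm{span}_K\{x_j,y_j:j\in\MS\}$, because the quadratic generators $f_{ab}$ contribute nothing in degree one. Consequently $P_{\MS''}(G)\subseteq P_{\MS}(G)$ already forces $\MS''\subseteq\MS$, and whenever $\MS''\subsetneq\MS$ the variable $x_i$ with $i\in\MS\setminus\MS''$ witnesses that such a containment is automatically strict. Hence only proper subsets of $\MS$ can give primes strictly below $P_{\MS}(G)$.

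For the ``not minimal'' direction I would prove the comparison: if some $i\in\MS$ is \emph{not} a cut vertex of $G_{\MS\setminus\{i\}}$ (equivalently $c(\MS\setminus\{i\})\ge c(\MS)$), then $P_{\MS\setminus\{i\}}(G)\subsetneq P_{\MS}(G)$. Writing $H$ for the component of $G_{\MS\setminus\{i\}}$ containing $i$, the hypothesis says $H-i$ is connected (or empty), so every quadratic generator $f_{ab}$ of $J_{\tilde H}$ with $a,b\neq i$ already lies in the binomial ideal of the complete graph on the vertices of $H-i$, while those with $i\in\{a,b\}$ lie in $(x_i,y_i)$; all remaining components are common to $G_{\MS}$ and $G_{\MS\setminus\{i\}}$. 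This yields $P_{\MS\setminus\{i\}}(G)\subseteq P_{\MS}(G)$, strict by the previous paragraph, so $P_{\MS}(G)$ is not minimal.

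The converse is the main obstacle. Assuming every $i\in\MS$ is a cut vertex of $G_{\MS\setminus\{i\}}$, I must rule out any $P_{\MS''}(G)\subseteq P_{\MS}(G)$ with $\MS''\subsetneq\MS$. Choosing $i\in\MS\setminus\MS''$, the cut-vertex hypothesis produces vertices $a,b\in[n]\setminus\MS$ lying in the same component of $G_{\MS\setminus\{i\}}$ but in different components of $G_{\MS}$; since $\MS''\subseteq\MS\setminus\{i\}$, the graph $G_{\MS''}$ contains $G_{\MS\setminus\{i\}}$, so $a,b$ remain connected there and $f_{ab}\in P_{\MS''}(G)$. The delicate point is to certify $f_{ab}\notin P_{\MS}(G)$. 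Here I would use that $P_{\MS}(G)$ is prime (as recorded in the excerpt), so $S/P_{\MS}(G)$ is a domain, and that modulo the linear variables it splits as a tensor product over $K$ of the determinantal coordinate rings $K[x_c,y_c:c\in V(G_l)]/J_{\tilde G_l}$, one factor per component of $G_{\MS}$; the two monomials of $f_{ab}=x_ay_b-x_by_a$ then land in two \emph{different} tensor factors, and using that $\bar x_a,\bar y_a$ (resp.\ $\bar x_b,\bar y_b$) are $K$-linearly independent one sees $f_{ab}$ is nonzero in the quotient. This contradicts $P_{\MS''}(G)\subseteq P_{\MS}(G)$, so $P_{\MS}(G)$ is minimal, completing the proof; the only genuinely subtle step is this nonvanishing check, which requires primality rather than a naive inspection of generators.
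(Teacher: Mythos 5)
The paper does not prove this proposition: it is quoted verbatim from \cite[Corollary 3.9]{HHHKR}, so there is no in-paper argument to compare against. Judged on its own, your proof is correct and complete. The reduction to the inclusion order on the finite family $\{P_{\MS}(G)\}$, the degree-one comparison forcing $\MS''\subseteq\MS$ and giving strictness, the containment $P_{\MS\setminus\{i\}}(G)\subseteq P_{\MS}(G)$ when $i$ fails to be a cut vertex of $G_{\MS\setminus\{i\}}$ (including the degenerate case where $i$ is isolated there), and the construction of a witness $f_{ab}\in P_{\MS''}(G)\setminus P_{\MS}(G)$ in the converse direction are all sound; this is essentially the argument given in \cite{HHHKR}. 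One small remark: in the final nonvanishing step, primality of $P_{\MS}(G)$ is not really the operative fact. What you actually use is the explicit isomorphism of $S/P_{\MS}(G)$ with a tensor product over $K$ of the rings $K[x_c,y_c:c\in V(G_l)]/J_{\tilde{G_l}}$, one factor per connected component of $G_{\MS}$, together with the observation that $\bar x_a,\bar y_a$ (resp.\ $\bar x_b,\bar y_b$) remain linearly independent in degree one because the determinantal relations start in degree two; the element $\bar x_a\otimes\bar y_b-\bar y_a\otimes\bar x_b$ is then a rank-two tensor and hence nonzero, with no appeal to the quotient being a domain.
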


In graph theoretical  terminology, for a connected graph $G$, $P_{\MS}(G)$ is a minimal prime ideal of $J_G$ if and only if $\MS$ is empty or $\MS$ is non-empty and  is a \emph{cut  set} of $G,$ that is, $i$ is a cut vertex of the restriction $G_{([n]\setminus\MS)\cup\{i\}}$ for every $i\in \MS.$ We recall that a vertex $v$ of the  graph $H$ is a \emph{cut vertex} of $H$ if its removing  breaks $H$ into more connected components than $H$ has. Let $\MC(G)$ be the set of all sets $\MS\subset [n]$ such that $P_{\MS}(G)$ is a minimal prime ideal of $J_G.$ Equality 
(\ref{eq:ht}) implies then the following.

\begin{Corollary}\label{unmixed}
Let $G$ be a connected graph on the vertex set $[n]$. Then $J_G$ is unmixed if and only if for every $\MS\in \MC(G)$, $c(\MS)=|\MS|+1.$
\end{Corollary}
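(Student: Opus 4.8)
The plan is to read off the statement directly from the height formula (\ref{eq:ht}). Since $J_G$ is radical by Theorem~\ref{PrimeDecomposition}, the associated primes of $S/J_G$ coincide with the minimal primes of $J_G$, and these are precisely the ideals $P_{\MS}(G)$ with $\MS \in \MC(G)$ by Proposition~\ref{cpset} and the definition of $\MC(G)$. Recalling that an ideal is unmixed exactly when all its associated primes have the same height, it follows that $J_G$ is unmixed if and only if all the $P_{\MS}(G)$ with $\MS \in \MC(G)$ share a common height. The radicality stated in Theorem~\ref{PrimeDecomposition} is what lets us restrict attention to the minimal primes, so I would flag the identification $\Ass(S/J_G)=\Min(J_G)$ at the very start.

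The key observation is that, because $G$ is connected, the empty set always lies in $\MC(G)$: by Proposition~\ref{cpset} the case $\MS=\emptyset$ is always admissible, and since $G_{\emptyset}=G$ is connected we have $c(\emptyset)=1$. Applying (\ref{eq:ht}) then gives $\height(P_{\emptyset}(G))=n-c(\emptyset)+|\emptyset|=n-1$. This pins down $n-1$ as the reference height that every other minimal prime must match for unmixedness to hold.

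For an arbitrary $\MS \in \MC(G)$, equation (\ref{eq:ht}) yields $\height(P_{\MS}(G))=n-c(\MS)+|\MS|$. Combining this with the previous paragraph, unmixedness is equivalent to the condition $n-c(\MS)+|\MS|=n-1$ for every $\MS \in \MC(G)$, which rearranges immediately to $c(\MS)=|\MS|+1$. This settles both implications simultaneously. There is no genuine obstacle here: the corollary is an unwinding of the height formula, with the only substantive input being that for a connected graph the empty cut set is always present and forces the common height to equal $n-1$. The single point that deserves explicit mention is the reduction to minimal primes via radicality, since that is what makes the numerical criterion both necessary and sufficient.
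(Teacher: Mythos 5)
Your proposal is correct and follows exactly the route the paper intends: the paper gives no written proof beyond the remark that Equality (\ref{eq:ht}) implies the corollary, and your argument (radicality reduces unmixedness to the minimal primes $P_{\MS}(G)$ with $\MS\in\MC(G)$, the empty cut set gives the reference height $n-1$, and the height formula turns the equality of heights into $c(\MS)=|\MS|+1$) is precisely that implication spelled out.
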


A general upper bound for the regularity of binomial edge ideals was first given in \cite{MaMu}, namely, $\reg(S/J_G)\leq n-1,$ and in the same paper it was conjectured that 
$\reg(S/J_G)=n-1$ if and only if $G$ is a path graph. This conjecture was  proved in \cite{KiMa}.

\begin{Theorem}\cite{KiMa}\label{regmax}
Let $G$ be  a graph on $n$ vertices which is not a path. Then $\reg(S/J_G)\leq n-2.$
\end{Theorem}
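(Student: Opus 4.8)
The plan is to prove the statement by a double induction of exactly the type used later for Theorem~\ref{NaokiThm}: a primary induction on the number of vertices $n$ and, for fixed $n$, a secondary induction in which one assumes the result for all graphs on $n$ vertices having strictly more edges, so that the complete graph $K_n$ plays the role of base case. I would establish the slightly more flexible statement ``if $G$ is any graph on $n$ vertices that is not a path, then $\reg(S/J_G)\le n-2$,'' which automatically incorporates disconnected graphs. Indeed, the disconnected case is disposed of first: if $G$ has connected components $G_1,\dots,G_c$ with $c\ge 2$, then $\reg(S/J_G)=\sum_i \reg(S_i/J_{G_i})$, and applying the Matsuda--Murai bound \cite{MaMu} to each component gives $\reg(S/J_G)\le\sum_i(|V(G_i)|-1)=n-c\le n-2$. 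So I may assume $G$ is connected and not a path; if $G$ is the complete graph $K_n$ I use that $J_{K_n}=I_2(X)$ has a linear (Eagon--Northcott) resolution, whence $\reg(S/J_{K_n})=1\le n-2$ for $n\ge 3$.

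For the inductive step I would fix a vertex $v$ and use the standard neighborhood-completion decomposition (Ohtani, \cite{Ohtani}): writing $G_v$ for the graph obtained from $G$ by turning $N_G(v)$ into a clique, $G\setminus v$ for vertex deletion, and $Q=(x_v,y_v)+J_{G\setminus v}$, one has $J_G=J_{G_v}\cap Q$ together with $J_{G_v}+Q=(x_v,y_v)+J_H$, where $H$ is the graph on $[n]\setminus\{v\}$ obtained from $G\setminus v$ by completing $N_G(v)$ to a clique. The resulting Mayer--Vietoris sequence
\[
0\longrightarrow S/J_G\longrightarrow (S/J_{G_v})\oplus(S/Q)\longrightarrow S/(J_{G_v}+Q)\longrightarrow 0
\]
yields $\reg(S/J_G)\le\max\{\reg(S/J_{G_v}),\ \reg(S/Q),\ \reg(S/(J_{G_v}+Q))+1\}$. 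Since $x_v,y_v$ form a regular sequence modulo the relevant ideals, $\reg(S/Q)=\reg(S'/J_{G\setminus v})$ and $\reg(S/(J_{G_v}+Q))=\reg(S'/J_H)$, where $S'$ is the polynomial ring on the $n-1$ remaining vertices.

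The key choice is to take $v$ \emph{non-simplicial}; such a vertex exists because a connected graph in which every vertex is simplicial is complete, a case already settled. Then $\deg v\ge 2$, so $G_v$ contains a triangle (hence is not a path) and has strictly more edges than $G$, whence the secondary induction gives $\reg(S/J_{G_v})\le n-2$. For the deletion term, the Matsuda--Murai bound applied to the $(n-1)$-vertex graph $G\setminus v$ gives $\reg(S'/J_{G\setminus v})\le n-2$ directly. For the intersection term I would show that $H$ is \emph{not} a path: if $\deg v\ge 3$ this is immediate since $H$ then contains a triangle, while if $\deg v=2$ with non-adjacent neighbors $a,b$, a short argument shows that $H=(G\setminus v)+\{a,b\}$ being a path would force $G$ itself to be a path, contradicting the hypothesis. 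Granting this, the primary induction applied to the $(n-1)$-vertex non-path graph $H$ gives $\reg(S'/J_H)\le (n-1)-2=n-3$, so the last term contributes at most $n-2$. Combining the three estimates yields $\reg(S/J_G)\le n-2$.

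The main obstacle is conceptual rather than computational: the graph $G_v$ has the \emph{same} number of vertices as $G$, so it is invisible to an induction on $n$ alone, and this is precisely what forces the secondary induction on the edge count (with $K_n$ as base), making the argument a genuine double induction. The second delicate point is guaranteeing that the intersection term $H$ is not a path, which is exactly what makes the non-path hypothesis on $G$ usable; I expect this to be the spot requiring the most careful, if elementary, graph-theoretic bookkeeping. Everything else reduces to the regular-sequence identifications for $Q$ and $J_{G_v}+Q$ and to the decomposition $J_G=J_{G_v}\cap Q$, which I would either cite or verify directly.
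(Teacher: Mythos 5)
The paper does not prove Theorem~\ref{regmax} at all: it is quoted verbatim from \cite{KiMa}, so there is no internal proof to match yours against. Your argument is, as far as I can check, correct and self-contained, and it takes a genuinely different route from the one in \cite{KiMa} (and from the related argument the paper gives for Theorem~\ref{NaokiThm}). Kiani and Saeedi Madani work by \emph{edge} deletion, using the short exact sequence behind Proposition~\ref{prop3cond} together with the Mohammadi--Sharifan description of $J_{H\setminus e}:f_e$ (Theorem~\ref{propcolon}); you instead use Ohtani's \emph{vertex} decomposition $J_G=J_{G_v}\cap\bigl((x_v,y_v)+J_{G\setminus v}\bigr)$ at a non-simplicial vertex, which is exactly the tool this paper deploys in Lemma~\ref{lm:4neighbors} and Theorem~\ref{thm:chordal}. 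What your approach buys is the complete avoidance of the colon-ideal computation; what it costs is the secondary downward induction on the edge count, which is forced because $G_v$ has the same number of vertices as $G$, with $K_n$ (regularity $1$ by Eagon--Northcott) as the base. All three terms of the Mayer--Vietoris estimate check out: $G_v$ contains the triangle $v,a,b$ so it is not a path and the edge-count induction applies; $G\setminus v$ only needs the crude Matsuda--Murai bound $n-2$ on $n-1$ vertices; and for $H=G_v\setminus v$ your case analysis is right --- the only delicate case is $\deg v=2$ with non-adjacent neighbours $a,b$, where $E(H)=E(G\setminus v)\sqcup\{\{a,b\}\}$, so if $H$ were the path $w_1\cdots w_{n-1}$ with $\{a,b\}=\{w_i,w_{i+1}\}$ then reinserting $v$ between $w_i$ and $w_{i+1}$ would exhibit $G$ as $P_n$, contradicting the hypothesis. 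The one point you should make explicit rather than leave implicit is the hypothesis under which \cite[Lemma 4.8]{Ohtani} applies (namely that $v$ is not simplicial, which is precisely how you chose $v$), and the elementary fact that a connected non-complete graph has a non-simplicial vertex (two vertices at distance two share a neighbour whose neighbourhood is then not a clique).
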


For a chordal graph $G$, in \cite[Theorem 3.5]{RSK} it was shown that the number $c(G)$ of maximal cliques of $G$ is an upper bound for $\reg(S/J_G).$ 

Recall that a subset $C\subset [n]$ is a \emph{clique} of $G$ if the induced subgraph of $G$ on the vertex set $C$ is a complete graph. The set of cliques of $G$ forms a simplicial complex $\Delta(G)$ called  the \emph{clique complex} of $G.$ Its facets are the maximal cliques of $G.$ By a famous theorem of Dirac (\cite{Dir} or \cite[Section 9.2]{HH10}), a connected graph $G$ is chordal if and only if 
either $G$ is a complete graph or the facets of $\Delta(G)$ can be ordered as $F_1,\ldots,F_c$ such that, for all $i>1,$ $F_i$ is a leaf of the simplicial complex generated by $F_1,\ldots,F_i.$ A \emph{leaf} of a simplicial complex $\Delta$ is a facet of $\Delta$  which has a \emph{branch}, that is, a facet $G$ such that for all facets $F^\prime$ of $\Delta$ with $F^\prime\neq F,$ we have $F^\prime\cap F\subseteq G\cap F. $

\section{A new upper bound for the regularity of binomial edge ideals}
\label{Bound}

In this section, we give a new general upper bound for the regularity of $S/J_G.$ 

\begin{Theorem}\label{NaokiThm}
Let $G$ be a connected graph on $[n]$. Then $\reg(S/J_G)\leq n-\dim \Delta(G).$
\end{Theorem}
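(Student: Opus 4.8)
The plan is to first reformulate the bound in terms of the clique number. Since $\dim\Delta(G)=\omega(G)-1$, where $\omega(G)$ denotes the size of a largest clique of $G$, the asserted inequality is equivalent to $\reg(S/J_G)\le n-|W|+1$ for a maximum clique $W$. I would in fact prove the a priori stronger statement that $\reg(S/J_G)\le n-|W|+1$ holds for \emph{every} clique $W$ of $G$. This extra generality is what makes an inductive argument go through, since the reductions below produce graphs in which $W$ need no longer be a maximum clique.

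The proof would proceed by a double induction: an outer induction on $d:=n-|W|$ and, for fixed $d$, an inner induction on the number of non-edges of $G$ (equivalently, downward on $|E(G)|$). The base case $d=0$ forces $W=[n]$, so $G=K_n$ and $J_G=I_2(X)$; here $S/J_G$ has a linear resolution and $\reg(S/J_G)=1=n-|W|+1$. For the inductive step I would fix a vertex $v\notin W$ (which exists since $|W|<n$) whose neighborhood $N_G(v)$ is not already a clique, and apply Ohtani's decomposition (cf. \cite{Ohtani}, \cite[Chapter 7]{HHO})
\[
J_G=J_{G_v}\cap\bigl((x_v,y_v)+J_{G\setminus v}\bigr),
\]
where $G_v$ is obtained from $G$ by completing $N_G(v)$ to a clique and $G\setminus v$ is the deletion. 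This yields the short exact sequence
\[
0\to S/J_G\to S/J_{G_v}\oplus S/\bigl((x_v,y_v)+J_{G\setminus v}\bigr)\to S/\bigl((x_v,y_v)+J_{H}\bigr)\to 0,
\]
with $H=(G_v)\setminus v$, and hence $\reg(S/J_G)\le\max\{\reg(S/J_{G_v}),\ \reg(S/J_{G\setminus v}),\ \reg(S/J_{H})+1\}$, using that each quotient by $(x_v,y_v)$ has the same regularity as the corresponding binomial edge ideal in the polynomial ring in the remaining variables.

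Next I would bound the three terms. Both $G\setminus v$ and $H$ live on $n-1$ vertices and still contain $W$ as a clique, so each has parameter $d-1$; a key point is that $H$ is connected, because completing $N_G(v)$ merges all components of $G\setminus v$ (each of which meets $N_G(v)$, as $G$ is connected). The outer induction then gives $\reg(S/J_H)\le(n-1)-|W|+1=n-|W|$, and likewise for $G\setminus v$, the disconnected case being absorbed by additivity of regularity over connected components together with the Matsuda--Murai bound that the regularity of a binomial edge ideal is at most the number of vertices minus one \cite{MaMu}. The term $S/J_{G_v}$ is the delicate one: here $W$ is still a clique and the number of vertices is unchanged, so $d$ does not drop, but $G_v$ has strictly fewer non-edges than $G$, so the inner induction hypothesis applies and yields $\reg(S/J_{G_v})\le n-|W|+1$. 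Taking the maximum gives $\reg(S/J_G)\le n-|W|+1$, as desired.

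The main obstacle is the vertex selection, that is, guaranteeing the reduction actually makes progress. The argument above requires a vertex $v\notin W$ that is \emph{not} simplicial; the genuinely hard case — which is precisely the base case of the inner induction — is a connected graph with $d>0$ in which every vertex outside $W$ is simplicial, for then $G_v=G$ and Ohtani's sequence degenerates. I expect this to be the technical heart of the theorem: there one must exploit that a simplicial (free) vertex never disconnects $G$ and argue instead by a separate pendant/free-vertex reduction that lowers $n$ while keeping control of the clique, rather than by completing a neighborhood. Verifying the exact regularity arithmetic in each short exact sequence, the identification of $\reg(S/((x_v,y_v)+J_{G\setminus v}))$ with the regularity in the smaller ring, and the connectivity bookkeeping for $H$ and $G\setminus v$ are the remaining careful but routine points.
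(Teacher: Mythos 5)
Your reduction to proving $\reg(S/J_G)\le n-|W|+1$ for every clique $W$, and your outer induction on $n-|W|$, coincide with the paper's strategy. Your treatment of the case where some $v\in V(G)\setminus W$ is non-simplicial is correct and is a genuine alternative to the paper's: you use Ohtani's decomposition $J_G=J_{G_v}\cap((x_v,y_v)+J_{G\setminus v})$ and the resulting short exact sequence, whereas the paper adds a missing edge $e=\{v_1,v_2\}$ inside $N_G(v)$ and works with Proposition~\ref{prop3cond}(b),(c) together with the colon-ideal description of Theorem~\ref{propcolon}. Your three terms are bounded correctly: $G_v$ has strictly fewer non-edges, $G\setminus v$ and $G_v\setminus v$ have $n-1$ vertices with $W$ still a clique, $G_v\setminus v$ is connected because the completed neighborhood $N_G(v)$ meets every component of $G\setminus v$, and the possibly disconnected deletion is absorbed by additivity of regularity plus the Matsuda--Murai bound. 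This half of your argument is, if anything, cleaner than the paper's Step 2.

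The gap is the case you yourself flag: a connected, non-complete graph in which every vertex outside $W$ is simplicial, so that no vertex is available for the Ohtani reduction. You only ``expect'' a pendant/free-vertex reduction to work there, but no such reduction is supplied, and none is available off the shelf. Note that a pendant vertex $v$ attached at $w$ does not in general yield a decomposition of $G$ in the sense of Herzog--Rinaldo (that would require $w$ to be simplicial in $G\setminus v$), so additivity of regularity cannot be invoked, and there is no a priori inequality $\reg(S/J_G)\le\reg(S/J_{G\setminus v})+1$ without further work. This configuration is non-vacuous (it occurs, e.g., for block graphs) and is where the paper spends most of its effort: for a simplicial vertex $v$ of degree $t\ge 2$ with neighbors $v_1,\dots,v_t$ it proves, via Theorem~\ref{propcolon}, that $J_{G\setminus e}:f_e$ collapses to the binomial edge ideal of the induced graph on $[n]\setminus\{v,v_1,\dots,v_{t-1}\}$ plus the variables $x_{v_i},y_{v_i}$ for $1\le i\le t-1$ --- a graph losing $t$ vertices but at most $t-1$ vertices of $W$, hence amenable to the outer induction --- and then peels off the edges at $v$ one at a time using Proposition~\ref{prop3cond}(a) until $\deg v=1$, treating the pendant case again through the colon ideal. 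Without this (or an equivalent) argument for the all-simplicial configuration, your induction does not close.
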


When  $G$ is not connected, we derive the following upper bound for the regularity of $S/J_G.$  

\begin{Corollary}\label{NaokiCor}
Let $G$ be a graph on $n$ vertices with the connected components $G_1,\ldots,G_c.$ Then
\[
\reg(S/J_G)\leq n-(\dim \Delta(G_1)+\cdots +\dim \Delta(G_c)).
\]
\end{Corollary}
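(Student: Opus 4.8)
The plan is to reduce the disconnected case to the connected case treated in Theorem~\ref{NaokiThm}, exploiting the fact that the binomial edge ideal of a disjoint union splits as a sum of ideals in pairwise disjoint sets of variables.

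First I would fix notation. Let $V(G_i)$ be the vertex set of the $i$-th component, with $n_i=|V(G_i)|$, so that $n_1+\cdots+n_c=n$. Put $S_i=K[x_j,y_j:j\in V(G_i)]$; since the $V(G_i)$ are pairwise disjoint, the rings $S_i$ involve pairwise disjoint sets of variables and $S=S_1\tensor_K\cdots\tensor_K S_c$. Because every generator $f_e$ of $J_G$ comes from an edge $e$ contained in a single component, we have $J_G=J_{G_1}S+\cdots+J_{G_c}S$, where each $J_{G_i}$ is the binomial edge ideal of $G_i$ regarded inside $S_i$. Consequently
\[
S/J_G \iso (S_1/J_{G_1})\tensor_K\cdots\tensor_K (S_c/J_{G_c}).
\]

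The key step is the additivity of regularity under such tensor products over the field $K$. If $F^{(i)}_\bullet$ denotes the minimal graded free resolution of $S_i/J_{G_i}$ over $S_i$, then the tensor product complex $F^{(1)}_\bullet\tensor_K\cdots\tensor_K F^{(c)}_\bullet$ is a graded free resolution of $S/J_G$ over $S$, and it is again minimal because the entries of its differentials lie in the graded maximal ideals of the factors, hence in the graded maximal ideal of $S$. The Künneth formula then gives
\[
\beta^S_{k,j}(S/J_G)=\sum \prod_{i=1}^c \beta^{S_i}_{k_i,j_i}(S_i/J_{G_i}),
\]
the sum running over all $k_1+\cdots+k_c=k$ and $j_1+\cdots+j_c=j$. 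Since all Betti numbers are nonnegative, a product on the right is nonzero exactly when each factor is, so taking the maximum of $j-k$ over the nonvanishing Betti numbers yields $\reg(S/J_G)=\sum_{i=1}^c\reg(S_i/J_{G_i})$.

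Finally, each $G_i$ is connected on $n_i$ vertices, so Theorem~\ref{NaokiThm} applies and gives $\reg(S_i/J_{G_i})\leq n_i-\dim\Delta(G_i)$. Summing over $i$ and using $\sum_i n_i=n$ produces the asserted bound
\[
\reg(S/J_G)=\sum_{i=1}^c\reg(S_i/J_{G_i})\leq \sum_{i=1}^c\bigl(n_i-\dim\Delta(G_i)\bigr)=n-\sum_{i=1}^c\dim\Delta(G_i).
\]
The only point requiring genuine care is the additivity claim, namely verifying that the tensor product of the minimal resolutions remains minimal; everything else is bookkeeping. Isolated vertices cause no difficulty: such a component $G_i$ satisfies $J_{G_i}=0$, whence $\reg(S_i/J_{G_i})=0\leq 1=n_i-\dim\Delta(G_i)$.
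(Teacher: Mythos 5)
Your proof is correct and follows exactly the route the paper intends: the paper states the corollary without an explicit proof, relying on the standard fact (used repeatedly elsewhere in the text, e.g.\ in the proof of Lemma~\ref{lm:4neighbors}) that $S/J_G$ is the tensor product over $K$ of the $S_i/J_{G_i}$, so that regularity is additive over connected components, after which Theorem~\ref{NaokiThm} is applied to each component and the bounds are summed. Your careful justification of the additivity via minimality of the tensored resolutions and the K\"unneth formula, and the check for isolated-vertex components, are welcome but do not change the argument.
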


Let us make some short remarks before proving the above theorem. 
This new bound will be an essential tool in proving Theorem~\ref{thm:licci}. Although for chordal graphs, this bound might be larger than the number of maximal cliques of $G,$ it is a general bound which  is better than $n-1.$ 

In what follows, we will need some notation and known results. If $H$ is a graph and $e\in E(H)$, we denote by $H\setminus e$ the subgraph of $H$ obtained by removing the edge $e$ from $E(H)$ and if $e_1,\ldots,e_m\in E(H),$ we write $H\setminus\{e_1,\ldots,e_m\}$ for the subgraph of $H$ which is obtained by removing the edges $e_1,\ldots,e_m.$ If 
$e=\{i,j\}$ where $i,j$ are vertices of $H$ and $e\not\in E(G)$, then $H\cup e$ is the graph with the same vertex set as $H$ and edge set $E(H)\cup\{e\},$ and $H_e$ is the graph with 
$V(H_e)=V(H)$ and $E(H_e)=E(H)\cup\{\{k,\ell\}: k,\ell\in N(i) \text{ or } k,\ell\in N(j)\}$ where $N(i)$ denotes the set of all neighbors of $i$ in $H.$

The next proposition is a direct consequence of the behavior of the regularity with respect to short  exact sequences; see \cite[Corollary 18.7]{Peeva}. 

\begin{Proposition}\cite[Proposition 2.1]{KiMa}\label{prop3cond}
Let $H$ be a graph on $n$ vertices and $J_H\subset S$ its binomial edge ideal.   Let  $e=\{i,j\}$ be an edge of $H$ and $f_e=x_iy_j-x_jy_i.$ Then, the following inequalities hold:
\begin{itemize}
	\item [\emph{(a)}] $\reg(J_H)\leq \max\{\reg(J_{H\setminus e}), \reg(J_{H\setminus e}:f_e)+1\};$
	\item [\emph{(b)}] $\reg(J_{H\setminus e})\leq \max\{\reg(J_{H}), \reg(J_{H\setminus e}:f_e)+2\};$
	\item [\emph{(c)}] $\reg(J_{H\setminus e}:f_e)+2 \leq \max\{\reg(J_{H\setminus e}), \reg(J_H)+1\}.$
\end{itemize}
\end{Proposition}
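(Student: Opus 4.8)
The plan is to deduce all three inequalities simultaneously from a single short exact sequence, combined with the standard behaviour of Castelnuovo--Mumford regularity along short exact sequences. First I would note that, directly from the definition of a binomial edge ideal, deleting the edge $e=\{i,j\}$ removes exactly the single generator $f_e$, so that
\[
J_H = J_{H\setminus e} + (f_e).
\]
Multiplication by $f_e$ then gives a graded homomorphism $S/(J_{H\setminus e}:f_e)\to S/J_{H\setminus e}$ whose kernel is zero by the very definition of the colon ideal, and whose image is $\bigl(J_{H\setminus e}+(f_e)\bigr)/J_{H\setminus e}=J_H/J_{H\setminus e}$, so that the cokernel is $S/J_H$. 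Since $f_e$ is homogeneous of degree $2$, accounting for the degree shift yields the short exact sequence of graded $S$-modules
\[
0 \longrightarrow \bigl(S/(J_{H\setminus e}:f_e)\bigr)(-2) \xrightarrow{\ \cdot f_e\ } S/J_{H\setminus e} \longrightarrow S/J_H \longrightarrow 0.
\]

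Next I would set $A=\bigl(S/(J_{H\setminus e}:f_e)\bigr)(-2)$, $B=S/J_{H\setminus e}$, and $C=S/J_H$, and invoke the three generic estimates for a short exact sequence $0\to A\to B\to C\to 0$ (see \cite[Corollary 18.7]{Peeva}):
\[
\reg B \leq \max\{\reg A,\ \reg C\},\qquad \reg A \leq \max\{\reg B,\ \reg C+1\},\qquad \reg C \leq \max\{\reg A-1,\ \reg B\}.
\]
The degree shift contributes $\reg A=\reg\bigl(S/(J_{H\setminus e}:f_e)\bigr)+2$, while $\reg B=\reg(S/J_{H\setminus e})$ and $\reg C=\reg(S/J_H)$.

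Finally I would translate these module inequalities into the stated form for the ideals using $\reg(I)=\reg(S/I)+1$ for every nonzero proper homogeneous ideal $I$. Writing $r=\reg(S/J_H)$, $r'=\reg(S/J_{H\setminus e})$ and $r''=\reg\bigl(S/(J_{H\setminus e}:f_e)\bigr)$, the three estimates above become $r'\le\max\{r''+2,\,r\}$, $r''+2\le\max\{r',\,r+1\}$ and $r\le\max\{r''+1,\,r'\}$; adding $1$ throughout and substituting $\reg(J_H)=r+1$, $\reg(J_{H\setminus e})=r'+1$, $\reg(J_{H\setminus e}:f_e)=r''+1$ then recovers exactly (b), (c) and (a) respectively.

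The computations here are entirely routine; the only point requiring care is the bookkeeping. One must correctly incorporate the degree-$2$ shift on the leftmost term together with the generic ``$-1$'' appearing in the estimate for $\reg C$, and pass consistently between the regularity of an ideal and that of its quotient. These are precisely what produce the asymmetry between the ``$+1$'' in (a) and the ``$+2$'' in (b) and (c). Once the short exact sequence is in place with the correct shift, each of (a), (b) and (c) is simply one of the three generic short-exact-sequence estimates, so there is no genuine obstacle beyond this bookkeeping.
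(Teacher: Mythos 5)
Your argument is correct and is exactly the route the paper indicates: it presents this proposition as a direct consequence of the short exact sequence $0 \to \bigl(S/(J_{H\setminus e}:f_e)\bigr)(-2) \to S/J_{H\setminus e} \to S/J_H \to 0$ together with the standard regularity estimates of \cite[Corollary 18.7]{Peeva}. Your bookkeeping with the degree-$2$ shift and the passage between $\reg(I)$ and $\reg(S/I)$ checks out, so nothing is missing.
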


In the settings of the above proposition, we have the following.

\begin{Theorem}\cite[Theorem 3.7]{MoSh}\label{propcolon}
\[J_{H\setminus e}:f_e=J_{(H\setminus e)_e} +I_{H,e}
\]
where $I_{H,e}$ is the monomial ideal generated by the set \[\{g_{\pi, t}| \pi: i,i_1,\ldots,i_s,j \text{ is a path between } i \text{ and }j \text{ and }0\leq t\leq s\}\] and 
\[g_{\pi, 0}=x_{i_1}\cdots x_{i_s}, g_{\pi,t}=y_{i_1}\cdots y_{i_t}x_{i_{t+1}}\cdots x_{i_s} \text{ for }  1\leq t\leq s.\]
\end{Theorem}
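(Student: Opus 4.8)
The plan is to prove the equality by a double inclusion, writing $G':=H\setminus e$ and $f_{ab}:=x_ay_b-x_by_a$ throughout, so that $f_e=f_{ij}$ and $f_{ba}=-f_{ab}$. Note that every path $\pi\colon i,i_1,\dots,i_s,j$ occurring in the definition of $I_{H,e}$ has interior vertices, hence avoids $e$ and is a path in $G'$ as well.

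The inclusion $J_{(G')_e}+I_{H,e}\subseteq (J_{G'}:f_e)$ is the elementary half, and I would verify it generator by generator. For a binomial generator $f_{k\ell}$ of $J_{(G')_e}$ I would use the exchange (Pl\"ucker) relation
\[
f_{k\ell}f_{ij}=f_{ki}f_{\ell j}-f_{kj}f_{\ell i}.
\]
If $\{k,\ell\}$ is already an edge of $G'$ then $f_{k\ell}\in J_{G'}$ and there is nothing to prove; otherwise the new edge forces $k,\ell\in N_{G'}(i)$ or $k,\ell\in N_{G'}(j)$, and in either case the two summands on the right each carry a factor from $\{f_{ki},f_{\ell i}\}$ (resp. $\{f_{kj},f_{\ell j}\}$) lying in $J_{G'}$, so $f_{k\ell}f_e\in J_{G'}$. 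For a monomial generator $g_{\pi,t}$ attached to a path $\pi\colon i=v_0,v_1,\dots,v_s,v_{s+1}=j$, I would prove the more general statement that $g_{\rho,t}\,f_{w_0w_{r+1}}\in J_{G'}$ for every path $\rho\colon w_0,\dots,w_{r+1}$ in $G'$ and every $0\le t\le r$, by induction on the length $r$. The base case $r=0$ is trivial since $g_{\rho,0}=1$ and $f_{w_0w_1}\in J_{G'}$. For the inductive step I would peel off the last interior vertex using whichever of the identities
\[
x_{v_s}f_{v_0v_{s+1}}=x_{v_0}f_{v_sv_{s+1}}+x_{v_{s+1}}f_{v_0v_s},\qquad
y_{v_s}f_{v_0v_{s+1}}=y_{v_0}f_{v_sv_{s+1}}+y_{v_{s+1}}f_{v_0v_s}
\]
matches the last factor of $g_{\pi,t}$: the first summand lands in $J_{G'}$ because $\{v_s,v_{s+1}\}=\{v_s,j\}$ is an edge of $\pi$, while the second summand is $x_{v_{s+1}}$ (resp. $y_{v_{s+1}}$) times $g_{\pi',t}\,f_{v_0v_s}$ for the shorter path $\pi'\colon v_0,\dots,v_s$, which lies in $J_{G'}$ by induction.

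For the reverse inclusion I would exploit that $J_{G'}$ is a radical ideal (Theorem~\ref{PrimeDecomposition}), so that
\[
(J_{G'}:f_e)=\bigcap_{\substack{\MS\in\MC(G')\\ f_e\notin P_{\MS}(G')}}P_{\MS}(G').
\]
The first task is the combinatorial characterization $f_e\in P_{\MS}(G')$ iff $i\in\MS$, or $j\in\MS$, or $i$ and $j$ lie in the same connected component of the restriction $(G')_{[n]\setminus\MS}$; equivalently, the primes surviving in the intersection are exactly those $P_{\MS}(G')$ with $i,j\notin\MS$ and $i,j$ in distinct components. I would then show that $Q:=J_{(G')_e}+I_{H,e}$ is radical and that its set of minimal primes is precisely this surviving family, which, together with the inclusion $Q\subseteq(J_{G'}:f_e)$ established above, forces $\sqrt{Q}=Q=(J_{G'}:f_e)$.

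The main obstacle is exactly this reverse inclusion: controlling the primary decomposition of $Q$ and matching it with the surviving $P_{\MS}(G')$. Concretely, one must translate the cut sets $\MS$ of $G'$ that separate $i$ from $j$ into cut sets of $(G')_e$ enlarged by the path data recorded in $I_{H,e}$, and rule out embedded components. An alternative that sidesteps radicality of $Q$ is a normal-form (Gr\"obner) argument: reduce an arbitrary $h$ with $hf_e\in J_{G'}$ modulo the stated generators of $Q$ and show the remainder must vanish, using a monomial order under which the leading terms of the binomials $f_{k\ell}$ and of the monomials $g_{\pi,t}$ are well separated. Either way, the bookkeeping of which neighbor-cliques and which paths are \emph{forced} by the condition $hf_e\in J_{G'}$ is the technical heart of the proof.
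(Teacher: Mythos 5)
First, a point of reference: the paper does not prove this statement at all --- it is imported verbatim from \cite[Theorem 3.7]{MoSh} --- so your attempt can only be measured against that external source and on its own merits.

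Your first inclusion, $J_{(H\setminus e)_e}+I_{H,e}\subseteq (J_{H\setminus e}:f_e)$, is complete and correct: the Pl\"ucker relation $f_{k\ell}f_{ij}=f_{ki}f_{\ell j}-f_{kj}f_{\ell i}$ handles the new binomial generators (each summand acquires a factor $f_{ki},f_{\ell i}$ or $f_{kj},f_{\ell j}$ coming from an actual edge of $H\setminus e$), and the induction on path length via the identities $x_{v_s}f_{v_0v_{s+1}}=x_{v_0}f_{v_sv_{s+1}}+x_{v_{s+1}}f_{v_0v_s}$ and its $y$-analogue correctly disposes of the monomials $g_{\pi,t}$; I checked both identities and the bookkeeping of which variable ($x_{v_s}$ or $y_{v_s}$) is peeled off according to whether $t<s$ or $t=s$.

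The genuine gap is the reverse inclusion, which is the actual content of the theorem, and neither of your two proposed routes closes it. The route via $(J_{H\setminus e}:f_e)=\bigcap P_{\MS}(H\setminus e)$ over the minimal primes not containing $f_e$ is a sound starting point (and your criterion for $f_e\in P_{\MS}(H\setminus e)$ is correct), but it reduces the theorem to showing that $Q:=J_{(H\setminus e)_e}+I_{H,e}$ is radical with exactly the surviving primes as its minimal primes. That is not a simplification: $Q$ is a sum of a binomial edge ideal and a monomial ideal, its radicality does not follow from Theorem~\ref{PrimeDecomposition}, and computing its minimal primes while excluding embedded ones is essentially equivalent to the statement being proved. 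The alternative Gr\"obner/normal-form suggestion is likewise only a plan --- no term order is exhibited, and it is not clear that the leading terms of the $f_{k\ell}$ and the $g_{\pi,t}$ form a Gr\"obner basis of $Q$ for any natural order. In short: half the equality is proved, and for the other half you have identified where the difficulty lies but not resolved it, so the proposal does not constitute a proof.
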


\begin{proof}[Proof of Theorem~\ref{NaokiThm}]
Clearly, the statement of the theorem follows if we show that for any clique $W\subset [n]$, we have 
\begin{equation}\label{eq:regclique}
\reg (S/J_G)\leq n-|W|+1 \text{ or,  equivalently, } \reg (J_G)\leq n-|W|+2.
\end{equation}

We prove this by induction on $n-|W|.$ If $n=|W|,$ then $G$ is the complete graph on $n$ vertices and it is well known that  $\reg(S/J_G)=1.$

Let $n-|W|>0.$ We proceed with the inductive step. 
For the remaining part of the proof, we need to define the following. For a vertex $v\in V(G),$ we set 
$\alpha_G(v):=\binom{\deg v}{2}-|E(G_{N(v)})|.$ Here, we used the usual  notation $G_U$ for the restriction of $G$ to the subset $U$ of $V(G).$ Obviously, $\alpha_G(v)=0$ if and only if 
$v$ is a simplicial vertex in $G.$ Recall that a  vertex of a graph is called \emph{simplicial} if it belongs to exactly one maximal clique. In addition, for a subset $W\subset V(G),$ we define $\alpha_G(W):=\min\{\alpha_G(v): v\in V(G)\setminus W\}.$ Further on, we proceed by induction on $\alpha_G(W).$\\
\textbf{Step 1.} Let $\alpha_G(W)=0.$ Thus,   there exists a simplicial vertex $v\in V(G)\setminus W.$ Now we consider two cases, namely  $\deg v=1$ and $\deg v\geq 2$.\\
\emph{Case 1.} Let $\deg(v)=1$ and $e=\{v,w\}\in E(G).$ By Proposition~\ref{prop3cond} (a), we have
\[
\reg(J_G)\leq \max\{\reg(J_{G\setminus e}), \reg(J_{G\setminus e}:f_e)+1\}.
\] Therefore, it is enough to show that 
\begin{equation}\label{eq:first}
\reg(J_{G\setminus e})\leq n-|W|+2
\end{equation}
and
\begin{equation}\label{eq:second}
\reg(J_{G\setminus e}:f_e)\leq n-|W|+1.
\end{equation}
Since $\deg(v)=1,$ the vertex $v$ becomes isolated in the graph $G\setminus e,$ thus $\reg(J_{G\setminus e})=\reg(J_{(G\setminus e)\setminus v}).$ So, for showing
inequality (\ref{eq:first}), we simply apply the inductive hypo\-thesis to the graph $(G\setminus e)\setminus v$. For showing inequality 
(\ref{eq:second}), we first apply Theorem~\ref{propcolon} and get
\[
\reg(J_{G\setminus e}:f_e)=\reg (J_{(G\setminus e)_e}),
\] since,  $I_{G,e}=(0)$ because the only path connecting $v$ and $w$ in $G$ is the edge  $\{v,w\}.$ In the graph $(G\setminus e)_e,$ $v$ is an isolated vertex, thus, 
\[
\reg (J_{(G\setminus e)_e})=\reg (J_{((G\setminus e)_e)\setminus v}).
\] Now we can apply again the inductive hypothesis for $(G\setminus e)_e\setminus v$ and  obtain
\[
\reg (J_{(G\setminus e)_e\setminus v})\leq (n-1)-|W|+2=n-|W|+1.
\] Therefore, Case 1 is completed.\\
\emph{Case 2.} Let $v$ be a simplicial vertex of $\deg(v)=t\geq 2.$ Before discussing this case, we prove the following. \\
\textbf{Claim}. Assume that there exists $v\in V(G)\setminus W$ a simplicial vertex with $\deg(v)\geq 2.$ Let $e$ be an edge of $G$ which contains $v.$ Then
\[
\reg(J_{G\setminus e}:f_e)\leq n-|W|+1.
\]
\begin{proof}[Proof of the Claim]
Let $\deg(v)=t,$ let $N_G(v)=\{v_1,\ldots,v_t\}$ be the set of neighbors of $v$ in $G,$ and set $e_i=\{v,v_i\}$ for $1\leq i\leq t.$ We may assume that $e=e_t$ and let us  consider the monomial ideal $I_{G,e}$ from Theorem~\ref{propcolon}.
Since $v$ is a simplicial vertex, for any $1\leq i\leq t-1,$ $v_t,v_i,v$ is a path in $G,$ thus $x_{v_i},y_{v_i}\in I_{G,e}$ for all $1\leq i\leq t-1.$ Moreover, every path from $v$ to $v_t$ 
must pass through some neighbor $v_i$ with $1\leq i\leq t-1.$ This implies that 
\[
I_{G,e}=(x_{v_i},y_{v_i}:1\leq i\leq t-1).
\]
By Theorem~\ref{propcolon}, we get
\[
J_{G\setminus e}:f_e=J_{(G\setminus e)_e} +(x_{v_i},y_{v_i}:1\leq i\leq t-1).
\] Set $H:=(G\setminus e)_e.$ Then
\[
J_{G\setminus e}:f_e=J_{H_{[n]\setminus\{v_1,\ldots,v_{t-1}\}}} +(x_{v_i},y_{v_i}:1\leq i\leq t-1),
\] because the binomial generators of $H=J_{(G\setminus e)_e}$ corresponding to the edges which contain some $v_i$ with $1\leq i\leq t-1$ are contained in $I_{G,e}.$ Since $v$ becomes an isolated 
vertex in $H_{[n]\setminus\{v_1,\ldots,v_{t-1}\}},$  we get 
\[
J_{G\setminus e}:f_e=J_{H_{[n]\setminus\{v,v_1,\ldots,v_{t-1}\}}} +(x_{v_i},y_{v_i}:1\leq i\leq t-1),
\]
which implies that 
\[
\reg (J_{G\setminus e}:f_e)=\reg (J_{H_{[n]\setminus\{v,v_1,\ldots,v_{t-1}\}}}).
\]
The graph $H_{[n]\setminus\{v,v_1,\ldots,v_{t-1}\}}$ has $n-t$ vertices and the clique $W\setminus \{v,v_1,\ldots,v_{t-1}\},$ thus we  may apply  the inductive hypothesis because 
\[(n-t)-|W\setminus \{v,v_1,\ldots,v_{t-1}\}|\leq n-t-|W|+t-1= n-|W|-1.\] Therefore, we get
\[
\reg (J_{G\setminus e}:f_e)=\reg (J_{H_{[n]\setminus\{v,v_1,\ldots,v_{t-1}\}}})\leq (n-t)-|W\setminus \{v,v_1,\ldots,v_{t-1}\}|+2\leq n-|W|+1,
\] and the claim is proved.
\end{proof} We now go back to the discussion of \emph{Case 2.}
Let $N_G(v)=\{v_1,\ldots, v_t\}$ be the set of the neighbors of $v$ in $G$ and $e_i=\{v,v_i\}$ for $1\leq i\leq t.$ By Proposition~\ref{prop3cond} and the Claim, 
we have
\[
\reg(J_G)\leq \max\{\reg(J_{G\setminus e_1}), \reg(J_{G\setminus e_1}:f_{e_1})+1\}\leq \max\{\reg(J_{G\setminus e_1}),n-|W|+2\}. 
\] Applying the same argument to $G\setminus e_1,$ we obtain
\[
\reg(J_G)\leq \max\{\reg(J_{G\setminus \{e_1,e_2\}}),n-|W|+2\}. 
\]
After $t-1$ steps, we get
\[
\reg(J_G)\leq \max\{\reg(J_{G\setminus \{e_1,e_2,\ldots,e_{t-1}\}}),n-|W|+2\}. 
\]
In the graph $G\setminus \{e_1,e_2,\ldots,e_{t-1}\}$, we have $\deg(v)=1.$ Consequently, by Case 1, we derive that $\reg(J_G)\leq n-|W|+2$ which completes the proof of Step 1.

Now we proceed to prove the inductive step on $\alpha_G(W)$.\\
\textbf{Step 2.} Let $\alpha_G(W)>0.$ This implies that there exists a non-simplicial vertex $v\in V(G)\setminus W$ such that $\alpha_G(W)=\alpha_G(v).$ Moreover, since $v$ is not simplicial, there exist $v_1,v_2\in N_G(v)$ such that $e=\{v_1,v_2\}\not\in E(G).$ By Proposition~\ref{prop3cond} (b) where $H=G\cup e,$ it follows
\begin{equation}\label{eq:third}
\reg(J_G)\leq \max\{\reg(J_{G\cup e}), \reg(J_G:f_e)+2\}.
\end{equation}
By the definition of $\alpha_G(v),$ we have $\alpha_{G\cup e}(v)=\alpha_G(v)-1,$ therefore $\alpha_{G\cup e}(W)\leq \alpha_G(W)-1.$ By induction on $\alpha_G(W)$, we then derive that 
\[
\reg(J_{G\cup e})\leq n-|W|+2.
\] In order to complete this last step, by using (\ref{eq:third}), it is enough to show that 
\begin{equation}\label{eq:last}
\reg(J_G:f_e)+2\leq n-|W|+2.
\end{equation} 
By Theorem~\ref{propcolon}, we have 
\begin{equation}\label{eq:fourth}
J_G:f_e=J_{G_e}+I_{G\cup e, e}.
\end{equation} Since $v_1,v,v_2$ is a path, the variables $x_v,y_v$ belong to $I_{G\cup e, e}.$ This implies that 
\[
I_{G\cup e, e}=(x_v,y_v)+I_{(G\setminus v)\cup e,e}.
\]
By replacing $I_{G\cup e, e}$ in equality (\ref{eq:fourth}), we can rewrite it as
\[
J_G:f_e=J_{(G\setminus v)_e}+ I_{(G\setminus v)\cup e,e} +(x_v,y_v).
\] This implies that 
\[
\reg(J_G:f_e)=\reg(J_{(G\setminus v)_e}+ I_{(G\setminus v)\cup e,e}).
\] On the other hand, by Theorem~\ref{propcolon} applied for $G\setminus v,$ we get
\[
\reg(J_{(G\setminus v)_e}+ I_{(G\setminus v)\cup e,e})=\reg(J_{G\setminus v}:f_e),
\] thus, 
\[
\reg(J_G:f_e)=\reg(J_{G\setminus v}:f_e).
\] Next, by Proposition~\ref{prop3cond}, (c) we have
\[
\reg(J_{G\setminus v}:f_e)+2\leq \max\{\reg(J_{G\setminus v}), \reg(J_{(G\setminus v)\cup e}) +1\}.
\] By the inductive hypothesis on $n-|W|,$ we have 
\[
\reg(J_{G\setminus v})\leq (n-1)-|W|+2=n-|W|+1,
\] and 
\[
\reg(J_{(G\setminus v)\cup e})+1\leq (n-1)-|W|+3=n-|W|+2.
\] Consequently, we proved inequality (\ref{eq:last}) and this completes Step 2 and the whole proof of the theorem.
 \end{proof}

\section{Licci binomial edge ideals}
\label{Liccisec}

As in the previous section, let $G$ be a simple graph on the vertex set $[n]$ and $S=K[x_1,\ldots, x_n,y_1,\ldots,y_n]$ the polynomial ring over a field $K.$ Let $\mm$ be the maximal graded ideal of $S$ and set $R=S_{\mm}.$

We recall the notion of decomposable graphs from \cite{HeRi}.

\begin{Definition}{\em
A connected graph $G$ is called \emph{decomposable} if there exists two subgraphs $G_1$ and $G_2$ of $G$ such that $G=G_1\cup G_2$ with $V(G_1)\cap V(G_2)=\{v\}$ where $v$ is a simplicial vertex in $G_1$ and 
$G_2.$ In this case we say that $G$ is decomposable in the vertex $v.$ Otherwise, the graph $G$ is called indecomposable.
}
\end{Definition}

As it was proved in \cite{HeRi}, if $G$ is decomposable, then $\reg(S/J_G)=\reg S_1/J_{G_1}+\reg S_2/J_{G_2}$ where $S_i=K[\{x_j,y_j:j\in V(G_i)\}]$ for $i=1,2.$ Moreover, by \cite[Theorem 2.7]{RaRi}, 
$J_G$ is Cohen-Macaulay if and only if $J_{G_1}$ and $J_{G_2}$ are Cohen-Macaulay.
\medskip

Before proving the main theorem of this section, we state some lemmas which are useful in what follows. 

\begin{Lemma}\label{lm:decompose}
Let $G$ be a decomposable graph as $G=G_1\cup G_2$ with $|V(G_i)|=n_i$ for $i=1,2$ and let $S_i=K[\{x_j,y_j\}:j\in V(G_i)]$ for $i=1,2.$
If $\reg(S/J_G)=n-2,$ then $\reg(S_1/J_{G_1})=n_1-2$ and $G_2$ is a path or $\reg(S_2/J_{G_2})=n_2-2$ and $G_1$ is a path.
\end{Lemma}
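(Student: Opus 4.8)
The plan is to reduce everything to the additivity of regularity under decomposition together with the classification of the graphs that attain the maximal regularity $n-1$. So first I would record the two structural inputs. Since $G=G_1\cup G_2$ with $V(G_1)\cap V(G_2)=\{v\}$, a single shared vertex gives $n=n_1+n_2-1$. By the result of \cite{HeRi} recalled just above, regularity is additive across a decomposition, so writing $r_i:=\reg(S_i/J_{G_i})$ we have
\[
r_1+r_2=\reg(S/J_G)=n-2=n_1+n_2-3.
\]

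Next I would bring in the general upper bound. By \cite{MaMu}, a connected graph $H$ on $m$ vertices satisfies $\reg(S/J_H)\leq m-1$, and by Theorem~\ref{regmax} this bound is strict unless $H$ is a path; equivalently, $\reg(S/J_H)\leq m-2$ whenever $H$ is not a path. Applying this separately to $G_1$ and $G_2$ yields $r_i\leq n_i-1$ for $i=1,2$, with the improvement $r_i\leq n_i-2$ in case $G_i$ is not a path. Here I would note that $G_1$ and $G_2$ are connected, since a decomposition glues two connected subgraphs at a single simplicial vertex, so the path-characterization of maximal regularity is applicable to each piece.

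The conclusion is then a short counting argument. If neither $G_1$ nor $G_2$ were a path, we would get $r_1+r_2\leq (n_1-2)+(n_2-2)=n_1+n_2-4$, which is strictly smaller than $n_1+n_2-3$ and contradicts additivity. Hence at least one of the components, say $G_2$, is a path, so $r_2=n_2-1$, and additivity then forces $r_1=(n_1+n_2-3)-(n_2-1)=n_1-2$; this is exactly the first alternative, namely $\reg(S_1/J_{G_1})=n_1-2$ and $G_2$ a path. The symmetric situation in which $G_1$ is a path gives $r_2=n_2-2$, i.e. the second alternative. (In passing one sees both cannot be paths simultaneously, since that would give $r_1+r_2=n_1+n_2-2=n-1\neq n-2$, so the two alternatives are in fact exclusive.)

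Since the whole argument is essentially a dimension count fed by two cited bounds, I do not expect a serious obstacle. The only point requiring a little care is the justification that $G_1$ and $G_2$ are connected, so that \cite{MaMu} and Theorem~\ref{regmax} apply verbatim to each component; this follows directly from the definition of a decomposable graph.
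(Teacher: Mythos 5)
Your proposal is correct and follows essentially the same route as the paper: additivity of regularity over the decomposition (\cite{HeRi}), the bound $\reg(S_i/J_{G_i})\leq n_i-1$, and Theorem~\ref{regmax} to identify the component attaining the maximum as a path. The paper phrases the count directly as $n-2=r_1+r_2\leq n-1$ forcing exactly one summand to drop by one, while you argue by contradiction assuming neither component is a path; these are the same argument.
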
 

\begin{proof}
We have  \[n-2=\reg(S/J_G)=\reg(S_1/J_{G_1})+\reg(S_2/J_{G_2})\leq (n_1-1)+(n_2-1)=n-1.\] This
implies that $\reg(S_1/J_{G_1})=n_1-2$  and $\reg(S_2/J_{G_2})=n_2-1,$ which means that $G_2$ is a path by Theorem~\ref{regmax}, or
$\reg(S_2/J_{G_2})=n_2-2$ and $\reg(S_1/J_{G_1})=n_1-1$, that is, $G_1$ is a path. 
\end{proof}

\begin{Lemma}\label{lm:4neighbors}
Let $G$ be a connected  graph on the vertex set $[n]$. Suppose that  $G$ has a cut vertex $v$ with $\deg_G(v)\geq 4$. Then $\reg(S/J_G)\leq n-3.$
\end{Lemma}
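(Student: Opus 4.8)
The plan is to prove the equivalent statement $\reg(J_G)\le n-2$. Write $C_1,\dots,C_k$ (with $k\ge 2$, since $v$ is a cut vertex) for the connected components of $G\setminus v$, set $a_i=|N_G(v)\cap C_i|$, so $\sum_i a_i=\deg_G(v)\ge 4$ and each $a_i\ge 1$, and let $G_i=G[C_i\cup\{v\}]$ be the branches at $v$, with $n_i=|C_i|+1$ and $\sum_i(n_i-1)=n-1$. I would split the argument according to whether some component meets $v$ in a single vertex.

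\emph{Case A: some $a_i=1$.} Let $C$ be such a component, $v_1$ the unique neighbour of $v$ in $C$, and $e=\{v,v_1\}$. I apply Proposition~\ref{prop3cond}(a), so it suffices to show $\reg(J_{G\setminus e})\le n-2$ and $\reg(J_{G\setminus e}:f_e)\le n-3$. For the first, removing $e$ disconnects $C$, so $G\setminus e=G[\,[n]\setminus C\,]\sqcup G[C]$; the graph $G'=G[\,[n]\setminus C\,]$ is connected with $\deg_{G'}(v)=\deg_G(v)-1\ge 3$, hence not a path, so Theorem~\ref{regmax} gives $\reg(S/J_{G'})\le (n-|C|)-2$, while \cite{MaMu} gives $\reg(S/J_{G[C]})\le |C|-1$; additivity of regularity over connected components yields $\reg(S/J_{G\setminus e})\le n-3$. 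For the colon term, the only path from $v$ to $v_1$ is $e$ itself, so $I_{G,e}=0$ and $J_{G\setminus e}:f_e=J_{(G\setminus e)_e}$ by Theorem~\ref{propcolon}. Again $(G\setminus e)_e$ splits as $H_0\sqcup C'$, where $H_0$ is the component containing $v$ and $C'$ is $C$ with extra edges; since $(\cdot)_e$ turns $N(v)\setminus\{v_1\}$ into a clique, $\{v\}\cup(N(v)\setminus\{v_1\})$ is a clique of size $\deg_G(v)\ge 4$ inside $H_0$. Theorem~\ref{NaokiThm} and additivity then give
\[
\reg(S/J_{(G\setminus e)_e})\le \big[(n-|C|)-(\deg_G(v)-1)\big]+\big[|C|-1\big]=n-\deg_G(v)\le n-4,
\]
so $\reg(J_{G\setminus e}:f_e)\le n-3$, and Case A is complete.

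\emph{Case B: $a_i\ge 2$ for all $i$.} Here every branch is far from a path: $C_i=G_i\setminus v$ is connected and $v$ has at least two neighbours in $C_i$, so $v$ is neither an endpoint nor an interior vertex of a path, whence $G_i$ is not a path and Theorem~\ref{regmax} gives $\reg(S/J_{G_i})\le n_i-2$. Using subadditivity of regularity across the cut vertex $v$, I then obtain
\[
\reg(S/J_G)\le \sum_{i=1}^{k}\reg(S/J_{G_i})\le \sum_{i=1}^{k}(n_i-2)=(n-1)-k\le n-3,
\]
since $k\ge 2$. This is exactly the desired bound, and it is what forces all the slack: each non‑path branch contributes one unit of deficiency and there are at least two of them.

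The step I expect to be the main obstacle is the subadditivity invoked in Case B. The excerpt supplies only the \emph{equality} $\reg(S/J_G)=\reg(S_1/J_{G_1})+\reg(S_2/J_{G_2})$ for decomposable graphs, where $v$ is simplicial in each branch \cite{HeRi}; here $v$ need not be simplicial (e.g. two triangles or two $4$-cycles glued at $v$). Establishing $\reg(S/J_G)\le\sum_i\reg(S/J_{G_i})$ for an arbitrary cut vertex—for two branches via the short exact sequence $0\to S/(J_{G_1}\cap J_{G_2})\to S/J_{G_1}\oplus S/J_{G_2}\to S/J_G\to 0$ together with a bound on $\reg(S/(J_{G_1}\cap J_{G_2}))$, and then iterating—is the real work. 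A secondary, already‑handled subtlety is that Case A needs the \emph{sharp} colon estimate $n-4$ rather than the naive $n-3$ coming from a single clique of size $4$; this extra unit is precisely what the cut‑vertex hypothesis provides, as it forces $(G\setminus e)_e$ to disconnect so that the split‑off component absorbs the discrepancy.
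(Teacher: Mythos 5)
Your Case~A is essentially correct: when some branch meets $v$ in a single vertex $v_1$, deleting $e=\{v,v_1\}$ and using Proposition~\ref{prop3cond}(a) together with Theorem~\ref{propcolon} (the colon ideal has no monomial part, and $(G\setminus e)_e$ acquires the clique $\{v\}\cup(N_G(v)\setminus\{v_1\})$ of size $\deg_G(v)\geq 4$, so Theorem~\ref{NaokiThm} applies) does give the bound. But Case~B rests entirely on the inequality $\reg(S/J_G)\leq\sum_i\reg(S_i/J_{G_i})$ for the branches at an arbitrary (non-simplicial) cut vertex, which you correctly identify as unproven and which is not available anywhere in the paper: the cited additivity from \cite{HeRi} requires $v$ to be simplicial in each piece, and the Mayer--Vietoris sequence for $J_{G_1}+J_{G_2}$ that you sketch only bounds $\reg(S/J_G)$ by $\max\{\reg(S/J_{G_1}\oplus S/J_{G_2}),\ \reg(S/(J_{G_1}\cap J_{G_2}))-1\}$, so you would still need to control $\reg(S/(J_{G_1}\cap J_{G_2}))$ --- which is the whole difficulty. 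As it stands, Case~B is a genuine gap, not a routine verification.

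The paper closes exactly this hole by a different decomposition that works uniformly in both of your cases. Since $v$ is a cut vertex, Ohtani's lemma gives $J_G=J_{G_v}\cap(J_{G\setminus v}+(x_v,y_v))$, where $G_v$ is $G$ with $N_G(v)$ completed to a clique, and the associated exact sequence
\[
0\to S/J_G\to S/J_{G_v}\oplus S/(J_{G\setminus v}+(x_v,y_v))\to S/(J_{G_v\setminus v}+(x_v,y_v))\to 0
\]
bounds $\reg(S/J_G)$ by the maximum of three terms. The first is at most $n-4$ by Theorem~\ref{NaokiThm}, because $G_v$ contains the clique $\{v\}\cup N_G(v)$ of size at least $5$; the second is at most $(n-1)-c\leq n-3$ because $G\setminus v$ has $c\geq 2$ components and regularity is additive over components; the third (plus one) is at most $n-3$ because $G_v\setminus v$ contains the clique $N_G(v)$ of size at least $4$. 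You should replace your Case~B (and may as well replace Case~A) with this argument; note that it is precisely Theorem~\ref{NaokiThm}, applied to the cliques created by completing $N_G(v)$, that substitutes for the missing subadditivity.
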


\begin{proof}
Since $v$ is  a cut vertex of $G$, by \cite[Lemma 4.8]{Ohtani}, we get 
\[
J_G=J_{G_v} \cap (J_{G\setminus v}+(x_v,y_v))
\] where $G_v$ is the graph on  $V(G_v)=V(G)$  with the edge set  \[E(G_v)=E(G)\cup \{\{u,w\}: u,w\in N_G(v)\}.\]
Consequently, we have the following exact sequence
\[
0\to \frac{S}{J_G}\to \frac{S}{J_{G_v}}\bigoplus\frac{S}{J_{G\setminus v}+(x_v,y_v)}\to \frac{S}{J_{G_v\setminus v}+(x_v,y_v)}\to 0,
\] since $J_{G_v}+(J_{G\setminus v}+(x_v,y_v))=J_{G_v\setminus v}+(x_v,y_v).$
From this exact sequence we obtain
\begin{equation}\label{ineq:sequ}
\reg\frac{S}{J_G}\leq \max\{\reg\frac{S}{J_{G_v}}, \reg\frac{S}{J_{G\setminus v}+(x_v,y_v)}, \reg \frac{S}{J_{G_v\setminus v}+(x_v,y_v)}+1\}.
\end{equation}
By our assumption, $v$ has at least 4 neighbors in $G.$ Therefore, in $G_v$ we have a maximal clique with at least $5$ vertices. By 
Theorem~\ref{NaokiThm}, we have $\reg(S/J_{G_v})\leq n-4.$ 
The graph $G\setminus v$ has $n-1$ vertices and at least  two connected components, say $G_1,\ldots, G_c$ with $c\geq 2,$ because $v$ is a cut vertex of $G$. Let 
$S^\prime=K[\{x_j,y_j\}: j\in [n]\setminus\{v\}].$ Then 
\[
\frac{S^\prime}{J_{G\setminus v}}\cong \frac{S_1}{J_{G_1}}\bigotimes_K\cdots \bigotimes_K \frac{S_c}{J_{G_c}}
\] where $S_i=K[\{x_j,y_j\}: j\in V(G_i)]$ for $i=1\ldots,c.$ This implies that
\[
\reg(S/J_{G\setminus v}+(x_v,y_v))=\reg(S^\prime/J_{G\setminus v})=\sum_{i=1}^c\reg(S_i/J_{G_i})\]  \[\leq \sum_{i=1}^c(|V(G_i)|-1)=(n-1)-c
\leq n-3.\]
If $v$ has at least $4$ neighbors in $G,$ then the graph $G_v\setminus v$ has a maximal clique with at least $4$ vertices, thus, by 
Theorem~\ref{NaokiThm}, we get \[\reg(S/J_{G_v\setminus v}+(x_v,y_v)))=\reg(S^\prime/J_{G_v\setminus v})\leq (n-1)-3=n-4.\]

Therefore, from  inequality (\ref{ineq:sequ}), we  get $\reg(S/J_G)\leq n-3.$
\end{proof}

\begin{Lemma}\label{lm:gap}
Let $G$ be a connected indecomposable graph on $n\geq 4$ vertices with the following properties:
\begin{itemize}
	\item [\emph{(a)}] $J_G$ is unmixed;
	\item [\emph{(b)}] $G$ has a vertex $v$ with exact two neighbors $u_1,u_2$ and $\{u_1,u_2\}\in E(G).$
\end{itemize}
Then $\reg(S/J_G)\leq n-3.$
\end{Lemma}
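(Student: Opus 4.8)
The plan is to exploit that $v$ is a \emph{simplicial} vertex of the triangle $T=\{v,u_1,u_2\}$, so that $\dim\Delta(G)\ge 2$ and Theorem~\ref{NaokiThm} already gives $\reg(S/J_G)\le n-2$; the whole content of the lemma is to gain one further unit, and this is exactly where hypotheses (a) and indecomposability must enter. First I would dispose of the degenerate degrees. If, say, $\deg_G u_2=2$, then $u_2$ is itself simplicial and $T$ is attached to the rest of $G$ only at $u_1$; then either the neighbours of $u_1$ outside $T$ form a clique, in which case $u_1$ is simplicial in $G\setminus\{v,u_2\}$ and $G$ is decomposable at $u_1$ — contradicting the hypothesis — or $u_1$ has two non-adjacent neighbours outside $T$, so $u_1$ is a cut vertex with $\deg_G u_1\ge 4$ and Lemma~\ref{lm:4neighbors} finishes. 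Hence I may assume $\deg_G u_1,\deg_G u_2\ge 3$.

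The core step is to delete the base edge $e=\{u_1,u_2\}$ of the triangle and apply Proposition~\ref{prop3cond}(a),
\[
\reg(J_G)\le\max\{\reg(J_{G\setminus e}),\ \reg(J_{G\setminus e}:f_e)+1\},
\]
so it suffices to bound each term by $n-2$. For the colon term I would show it reduces to the same construction on $G\setminus v$. Since $u_1,v,u_2$ is a path in $G$, Theorem~\ref{propcolon} forces $x_v,y_v\in I_{G,e}$; factoring these out kills $v$, and because the surviving monomial generators come precisely from the $u_1$--$u_2$ paths avoiding $v$ while $((G\setminus e)_e)\setminus v=((G\setminus v)\setminus e)_e$, one obtains
\[
\reg(J_{G\setminus e}:f_e)=\reg\bigl(J_{(G\setminus v)\setminus e}:f_e\bigr).
\]
Writing $H=G\setminus v$ (connected on $n-1$ vertices, since the edge $u_1u_2$ keeps it connected) and applying Proposition~\ref{prop3cond}(c) to $H$, it remains to check $\reg(J_{H\setminus e})\le n-1$, automatic on $n-1$ vertices, and $\reg(J_H)\le n-2$, which holds by Theorem~\ref{regmax} as soon as $H$ is not a path. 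That $H$ is not a path is forced by indecomposability: if $H$ were a path, then as $u_1,u_2$ are adjacent they cannot both be endpoints, so some endpoint $p\notin\{u_1,u_2\}$ is a pendant vertex of $G$, and removing it leaves its neighbour simplicial (its surviving neighbours being either a single path-vertex or the adjacent pair $\{u_2,v\}$), making $G$ decomposable — a contradiction. This gives $\reg(J_{G\setminus e}:f_e)\le n-3$, so the colon term contributes exactly $n-2$.

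The remaining and genuinely hard point is the first term, namely $\reg(J_{G\setminus e})\le n-2$, equivalently $\reg(S/J_{G\setminus e})\le n-3$. This is the main obstacle: deleting $e$ destroys the only guaranteed triangle, so $G\setminus e$ may be triangle-free and both Theorem~\ref{NaokiThm} and Theorem~\ref{regmax} fall one unit short, while Proposition~\ref{prop3cond}(b) is circular because $(G\setminus e)_v=G$. To close it I would finally invoke the unmixedness hypothesis through Corollary~\ref{unmixed}, splitting on the cut vertices of $G$. If $u_1$ (say) is a cut vertex, I would bound $\reg(S/J_G)$ directly by Ohtani's decomposition at $u_1$, exactly as in Lemma~\ref{lm:4neighbors}: unmixedness gives $c(\{u_1\})=2$, bounding the $G\setminus u_1$ summand by $n-3$, and the clique on $N_G(u_1)\cup\{u_1\}$ (of size $\ge 4$, since $\deg_G u_1\ge 3$) bounds the $G_{u_1}$ summand by $n-3$ via Theorem~\ref{NaokiThm}; the third summand $G_{u_1}\setminus u_1$ still carries $v$ as a simplicial degree-two vertex of a triangle, so it should yield to the same reduction on one fewer vertex. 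If neither $u_1$ nor $u_2$ is a cut vertex, unmixedness forces $G\setminus\{u_1,u_2\}$ to split into exactly three components $\{v\}\sqcup C_1\sqcup C_2$, and I would exploit this rigidity either to exhibit a triangle in $G\setminus e$ or to split $\reg(J_{G\setminus e})$ across $u_1$ and $u_2$. I expect this last term to demand the most careful bookkeeping; it is the step that genuinely needs all three hypotheses — indecomposability, unmixedness, and the triangle at the degree-two vertex — working together.
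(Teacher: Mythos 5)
Your treatment of the colon term is essentially correct and is a genuinely different (and valid) route from the paper's: from $x_v,y_v\in I_{G,e}$ and $((G\setminus e)_e)\setminus v=((G\setminus v)\setminus e)_e$ you get $\reg(J_{G\setminus e}:f_e)=\reg(J_{(G\setminus v)\setminus e}:f_e)$, and Proposition~\ref{prop3cond}(c) applied to $G\setminus v$ (not a path, by your indecomposability argument) gives $\reg(J_{G\setminus e}:f_e)+1\le n-2$, as needed. The opening reduction to $\deg u_1,\deg u_2\ge 3$ also works. The gap is the deletion term, and it is a real one.

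Having chosen to delete the base edge $e=\{u_1,u_2\}$, you must prove $\reg(S/J_{G\setminus e})\le n-3$, and you do not: you only sketch two sub-cases, neither of which closes. In the cut-vertex sub-case you need $\reg(S/J_{G_{u_1}\setminus u_1})\le n-4$, i.e.\ the conclusion of the very lemma being proved for an $(n-1)$-vertex graph; this requires an induction on $n$ that you have not set up and, more seriously, a verification that $G_{u_1}\setminus u_1$ is again unmixed and indecomposable, which is not automatic (compare the care taken in the proof of Theorem~\ref{thm:licci} to check unmixedness of $H_1$ before invoking Lemma~\ref{lm:gap}). In the non-cut-vertex sub-case, exhibiting a triangle in $G\setminus e$ only yields $n-2$ via Theorem~\ref{NaokiThm}, one unit short, and ``splitting $\reg(J_{G\setminus e})$ across $u_1$ and $u_2$'' is not an argument; note the bound you need does not even follow from the lemma's own conclusion via Proposition~\ref{prop3cond}(b), which gives only $\reg(J_{G\setminus e})\le n-1$, so you are attacking something at least as hard as the lemma itself. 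The paper sidesteps all of this by first extracting from unmixedness the structural fact you never use: if $\deg u_1\le 3$ and $\deg u_2\le 3$, indecomposability forces a second $u_1$--$u_2$ path, and then $\MS=\{u_1,u_2\}$ is a cut set with $c(\MS)=|\MS|$, contradicting Corollary~\ref{unmixed}; hence some $u_i$, say $u_2$, has degree at least $4$. It then deletes the edge $\{u_1,v\}$ rather than $\{u_1,u_2\}$, which makes $u_2$ a cut vertex of degree at least $4$ in $G\setminus e$, so Lemma~\ref{lm:4neighbors} bounds the deletion term by $n-3$ immediately, while the colon term collapses onto $(G\setminus e)_e\setminus\{u_2,v\}$, shown not to be a path again by unmixedness. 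You should rework your argument with that choice of edge.
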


\begin{proof} If $n=4,$ then there are only two graphs which satisfy the condition (b), namely two triangles which share an edge and a triangle with an edge attached to one of its vertices; see Figure~\ref{fig:n=4}.

\begin{figure}[hbt]
\begin{center}
\psset{unit=0.9cm}
\begin{pspicture}(-4,-2)(5,2)
\psline(1,0)(5,0)
\pspolygon(1,0)(2,2)(3,0)

\rput(1,0){$\bullet$}
\rput(3,0){$\bullet$}
\rput(2,2){$\bullet$}
\rput(5,0){$\bullet$}

\psline(-3,0)(-1,0)
\pspolygon(-3,0)(-2,2)(-1,0)(-2,-2)

\rput(-3,0){$\bullet$}
\rput(-2,2){$\bullet$}
\rput(-1,0){$\bullet$}
\rput(-2,-2){$\bullet$}

\end{pspicture}
\end{center}
\caption{4 vertices}\label{fig:n=4}
\end{figure}
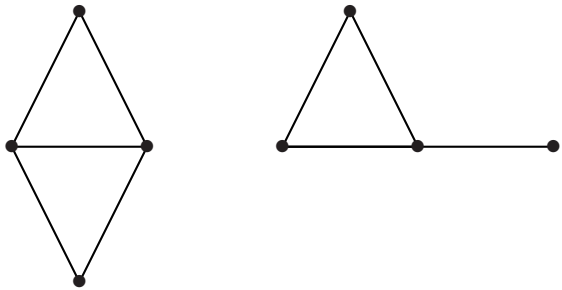

The first graph does not satisfy the condition (a), while the second graph is decomposable. Thus, we may consider $n\geq 5.$

Let us consider an indecomposable graph $G$ with $n\geq 5$ vertices satisfying the conditions (a) and (b).  We claim that $\deg u_1\geq 4$ or $\deg u_2\geq 4.$ Let us assume that this is not the case, thus $\deg u_1\leq 3$ and 
$\deg u_2\leq 3.$ Since $G$ is indecomposable, it follows that $\deg u_1= 3,$  $\deg u_2= 3,$ and there exists a path connecting $u_1$ and $u_2$ different from the edge $\{u_1,u_2\}$ and the path $u_1,v,u_2.$ But, in this case, the set $\MS=\{u_1,u_2\}$ is a cut set 
of $G$ with $c(\MS)=|\MS|,$ which is impossible since $J_G$ is an unmixed ideal. 

Without loss of generality, we may assume that $\deg u_2\geq 4.$

We set $e=\{u_1,v\}.$ By 
Proposition~\ref{prop3cond} (a),
we have
\begin{equation}\label{eq:removee}
\reg\frac{S}{J_G}\leq \max\left\{\reg\frac{S}{J_{G\setminus e}},\reg\frac{S}{J_{G\setminus e}:f_e}+1\right\}.
\end{equation}
 
In the graph $G\setminus e,$ $u_2$ is a cut vertex with at least $4$ neighbors. Thus, by Lemma~\ref{lm:4neighbors}, it follows that 
\[
\reg\frac{S}{J_{G\setminus e}}\leq n-3.
\] Now we look at $J_{G\setminus e}:f_e.$ By applying Theorem~\ref{propcolon}, we obtain
\[
J_{G\setminus e}:f_e=J_{(G\setminus e)_e}+(x_{u_2},y_{u_2})
\] since all the paths connecting $u_1$ and $v$ pass trough $u_2.$ Therefore, since $v$ becomes an isolated vertex in the graph 
$(G\setminus e)_e\setminus u_2,$ we get
\[
\reg\frac{S}{J_{G\setminus e}:f_e}=\reg\frac{S}{J_{(G\setminus e)_e}+(x_{u_2},y_{u_2})}=
\reg\frac{S^\prime}{J_{(G\setminus e)_e\setminus \{u_2,v\}}} 
\] where $S^\prime=K[\{x_j,y_j\}:j\in [n]\setminus\{u_2,v\}].$ If the graph $(G\setminus e)_e\setminus \{u_2,v\}$
is a path, as $\deg u_2\geq 4,$ the graph $G$ looks like in Figure~\ref{fig:notpath}, that is, there are some edges connecting $u_2$ 
to some vertices of the  the path $(G\setminus e)_e\setminus \{u_2,v\}$ different from $u_1.$ But then $J_G$ is not unmixed since 
$\MS=\{u_1,u_2\}$ is a cut set of $G$ with $c(\MS)=|\MS|,$ a contradiction. 
Therefore, the graph $(G\setminus e)_e\setminus \{u_2,v\}$
is not a path.
 Thus, by Theorem~\ref{regmax}, we obtain
\[
\reg\frac{S}{J_{G\setminus e}:f_e}=\reg\frac{S^\prime}{J_{(G\setminus e)_e\setminus \{u_2,v\}}}\leq (n-2)-2=n-4,
\] which implies that 
\[
\reg\frac{S}{J_{G\setminus e}:f_e}+1\leq n-3.
\] and the proof of the lemma is completed. 

\begin{figure}[hbt]
\begin{center}
\psset{unit=1cm}
\begin{pspicture}(-6,0)(3,4)



\psline(-3,0)(-3,3)
\psline(-3,0)(-1,3)
\psline(-1,3)(-3,2)
\pspolygon(-3,3)(-1,3)(-2,4)

\rput(-3,0){$\bullet$}
\rput(-3,1){$\bullet$}
\rput(-3,2){$\bullet$}
\rput(-3,3){$\bullet$}
\rput(-2,4){$\bullet$}
\rput(-1,3){$\bullet$}

\rput(-2,4.3){$v$}
\rput(-0.6,3){$u_2$}
\rput(-3.4,3){$u_1$}
\rput(-2.6,3.7){$e$}
\end{pspicture}
\end{center}
\caption{The graph $G$ when  $(G\setminus e)_e\setminus \{u_2,v\}$
is a path}\label{fig:notpath}
\end{figure}
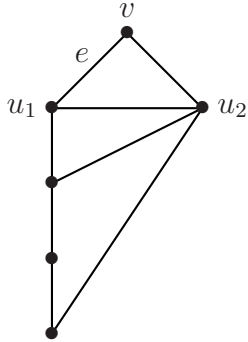
\end{proof}

We can now state the main result of this section. 

\begin{Theorem}\label{thm:licci}
Let $G$ be a connected graph on the vertex set $[n].$ Then the following statements are equivalent:
\begin{itemize}
	\item [\emph{(i)}]  $(J_G)_{\mm}\subset R$ is licci.
	\item [\emph{(ii)}] $J_G$ is  Cohen-Macaulay and $n-2\leq \reg(S/J_G)\leq n-1.$
	\item [\emph{(iii)}] $G$ is a path graph or it is isomorphic to one of the graphs depicted in Figure~\ref{fig:licci} where $r,s,t$ are non-negative integers. In other words, $G$ is a triangle with possibly some paths connected to some of its vertices.
\end{itemize}
\end{Theorem}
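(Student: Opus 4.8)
The plan is to prove the cycle of implications (i)~$\Rightarrow$~(ii)~$\Rightarrow$~(iii)~$\Rightarrow$~(i). The implication (i)~$\Rightarrow$~(ii) is the quick one. Since a licci ideal is Cohen-Macaulay, $J_G$ is Cohen-Macaulay, hence unmixed. For a connected graph the set $\MS=\emptyset$ always yields a minimal prime $P_\emptyset(G)=J_{\tilde G}$, and by \eqref{eq:ht} with $c(\emptyset)=1$ its height is $n-1$; unmixedness then forces $\height J_G=n-1$. As $J_G$ is generated in degree $2$ we have $\indeg J_G=2$, so Theorem~\ref{HUineq} gives $\reg(S/J_G)\ge(\height J_G-1)(\indeg J_G-1)=n-2$, while the bound of \cite{MaMu} gives $\reg(S/J_G)\le n-1$. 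This is exactly (ii).

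For (ii)~$\Rightarrow$~(iii) I would induct on $n$. If $\reg(S/J_G)=n-1$, then $G$ is a path by the contrapositive of Theorem~\ref{regmax}. So assume $\reg(S/J_G)=n-2$ with $J_G$ Cohen-Macaulay. If $G$ is decomposable, write $G=G_1\cup G_2$ at a simplicial vertex $v$; Lemma~\ref{lm:decompose} gives (after relabelling) that $G_2$ is a path and $\reg(S_1/J_{G_1})=n_1-2$, and $J_{G_1}$ is Cohen-Macaulay by \cite[Theorem 2.7]{RaRi}. By induction $G_1$ is a path or a triangle with paths attached; since $\reg(S_1/J_{G_1})=n_1-2\ne n_1-1$ it is not a path, so it is a triangle with paths attached. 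As $v$ is simplicial in $G_1$ it is either an endpoint of an attached path or a bare triangle vertex, so gluing the path $G_2$ at $v$ keeps $G$ of this shape. Thus the whole problem reduces to the indecomposable case.

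The indecomposable case is where the real work lies, and it is the main obstacle (as flagged in the introduction): I must show that no indecomposable graph with $n\ge 4$ has $\reg(S/J_G)=n-2$ and $J_G$ Cohen-Macaulay, so that $G$ must be $K_3$ (the only indecomposable connected graph on $\le 3$ vertices with $\reg=n-2$). The first reduction is clean: $\reg(S/J_G)=n-2$ together with Theorem~\ref{NaokiThm} forces $\dim\Delta(G)\le 2$, i.e.\ $G$ has no $K_4$. The triangle-free subcase, where Lemma~\ref{lm:gap} is vacuous, I would dispose of by a separate regularity estimate showing that an indecomposable triangle-free Cohen-Macaulay graph on $n\ge 4$ vertices has $\reg(S/J_G)\le n-3$. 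Once $G$ contains a triangle, the preparatory lemmas become available: Lemma~\ref{lm:4neighbors} forbids a cut vertex of degree $\ge 4$, and Lemma~\ref{lm:gap} (valid because $J_G$ is unmixed) forbids a degree-two vertex whose two neighbors are adjacent, so in particular every vertex of a triangle has degree $\ge 3$. The delicate part is to turn these local restrictions into a global contradiction: by analyzing the neighborhoods of the vertices of a fixed triangle together with the cut sets permitted by unmixedness, one must in every configuration produce either a $K_4$, a cut vertex of degree $\ge 4$, or a degree-two vertex inside a triangle (each forcing $\reg\le n-3$ via Theorem~\ref{NaokiThm}, Lemma~\ref{lm:4neighbors}, or Lemma~\ref{lm:gap}), or else exhibit a simplicial cut vertex contradicting indecomposability. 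This exhaustive case analysis is the technically hardest step.

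Finally, for (iii)~$\Rightarrow$~(i) the base cases are licci: $J_{K_2}$ is principal, hence a complete intersection, and $J_{K_3}=I_2(X)$ is a perfect ideal of grade two, which is licci by the classical theory of codimension-two Cohen-Macaulay ideals. Every graph in (iii) is obtained from $K_2$ or $K_3$ by repeatedly attaching a path at a simplicial vertex, equivalently by repeatedly splitting off a pendant edge at a simplicial vertex. I would therefore prove, by exhibiting an explicit direct link relating $J_{G'}$ to $J_G$ when $G'$ is $G$ with a pendant edge attached at a simplicial vertex, that this single operation preserves licci-ness, and then conclude by induction. It is essential that one attaches \emph{paths} and not larger cliques: splitting off a triangle (as in two triangles meeting at a vertex) drops the regularity to $n-3$ and destroys licci-ness, in agreement with the bookkeeping $\reg(S/J_{G'})=\reg(S/J_G)+1$ for a pendant-edge attachment, so the preservation statement genuinely depends on the pendant being a single edge.
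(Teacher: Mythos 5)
Your implications (i)~$\Rightarrow$~(ii) and the reduction of (ii)~$\Rightarrow$~(iii) to the indecomposable case match the paper. But the heart of the theorem --- showing that no indecomposable graph on $n\ge 4$ vertices has $J_G$ Cohen--Macaulay and $\reg(S/J_G)=n-2$ --- is not actually proved in your proposal: you only describe the \emph{shape} of an ``exhaustive case analysis'' of the neighborhoods of a triangle, and you defer the triangle-free subcase to an unproven ``separate regularity estimate.'' This is a genuine gap, not a stylistic one. Moreover, your plan is missing the ingredient that makes the paper's argument close: by \cite[Remark 5.3]{BMS}, Cohen--Macaulayness forces $G$ to have a cut vertex $v$; indecomposability and Lemma~\ref{lm:4neighbors} then pin $\deg v$ down to exactly $3$, with no edges among the three neighbors, and unmixedness forces $G\setminus v$ to have exactly two components. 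The contradiction is then obtained not by a combinatorial case analysis but by deleting the edge $e=\{v,w\}$ toward the small component and estimating both terms in Proposition~\ref{prop3cond}(a): the graph $G\setminus e$ splits into two non-path components (giving $\reg\le n-3$ via Theorem~\ref{regmax}), and the colon ideal $J_{G\setminus e}:f_e$ is analyzed via Theorem~\ref{propcolon}, with Lemma~\ref{lm:gap} applied to the component $H_1$ of $(G\setminus e)_e$ containing the new triangle $u_1,u_2,v$ (after verifying that $J_{H_1}$ inherits unmixedness from $J_G$). Without the cut-vertex input your local restrictions (no $K_4$, no high-degree cut vertex, no degree-two vertex in a triangle) do not visibly rule out, say, a $2$-connected triangle-containing graph, so the ``global contradiction'' you promise is not established.

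For (iii)~$\Rightarrow$~(i) you also leave the key step unproved: you assert that attaching a pendant edge at a simplicial vertex preserves licciness ``by exhibiting an explicit direct link,'' but no such link is exhibited, and constructing one is not routine. The paper instead observes that for a decomposition $G=G_1\cup G_2$ one has $\Tor_i(S/J_{G_1},S/J_{G_2})=0$ for $i>0$ by \cite[Proposition 3]{HeRi}, so $J_{G_1}$ and $J_{G_2}$ are transversal, and then invokes \cite[Theorem 2.6]{Joh}: the transversal sum of a licci ideal and a complete intersection is licci. Starting from the triangle (licci as a height-two Cohen--Macaulay ideal, by \cite{PeSz}) and attaching the three paths one at a time gives the conclusion. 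If you want to keep your ``one pendant edge at a time'' route you must either prove the direct-link claim or reduce it to Johnson's theorem as the paper does.
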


\begin{figure}[hbt]
\begin{center}
\psset{unit=0.9cm}
\begin{pspicture}(-5,0)(4,5)
\psline(-4,0)(4,0)
\pspolygon(-1,0)(1,0)(0,2)
\psline(0,2)(0,5)
\rput(-4,0){$\bullet$}
\rput(-3,0){$\bullet$}
\rput(-2,0){$\bullet$}
\rput(-1,0){$\bullet$}
\rput(1,0){$\bullet$}
\rput(2,0){$\bullet$}
\rput(3,0){$\bullet$}
\rput(4,0){$\bullet$}
\rput(0,2){$\bullet$}
\rput(0,3){$\bullet$}
\rput(0,4){$\bullet$}
\rput(0,5){$\bullet$}

\rput(1.5,0.3){$e_1$}
\rput(2.5,0.3){$\cdots$}
\rput(3.5,0.3){$e_r$}

\rput(-1.5,0.3){$e^{\prime}_1$}
\rput(-2.5,0.3){$\cdots$}
\rput(-3.5,0.3){$e^{\prime}_s$}

\rput(0.3,2.5){$e^{\prime\prime}_1$}
\rput(0.3,3.5){$\vdots$}
\rput(0.3,4.5){$e^{\prime\prime}_t$}

\end{pspicture}
\end{center}
\caption{Licci graphs}\label{fig:licci}
\end{figure}
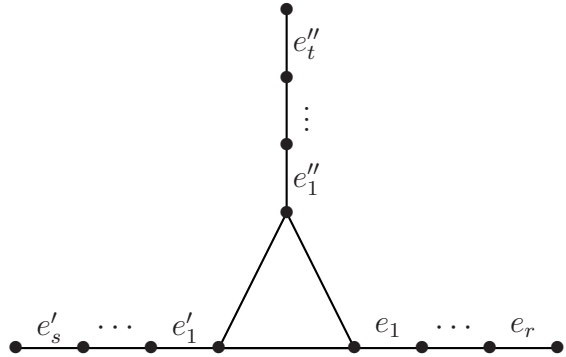

\begin{proof} (i) $\Rightarrow$ (ii). Let $(J_G)_{\mm}\subset R$ be licci.
By Theorem~\ref{HUineq}, it follows that 
$\reg(S/J_G)\geq \height(J_G)-1.$ Since $J_G$ is Cohen-Macaulay, thus unmixed, we have $\height(J_G)=\height P_{\emptyset}(G)=n-1,$ by (\ref{eq:ht}). Therefore, if $G$ is connected and $(J_G)_{\mm}$ 
is licci, then $J_G$ is Cohen-Macaulay and $\reg(S/J_G)\geq n-2.$ But we know from \cite{MaMu} that $\reg(S/J_G)\leq n-1$. 

Let us prove that (ii) $\Rightarrow$ (iii). Since, by Theorem~\ref{regmax}, we have $\reg(S/J_G)=n-1$ if and only if $G$ is a path graph, it remains to consider $\reg(S/J_G)=n-2.$  By using Lemma~\ref{lm:decompose},
we may reduce the problem to considering only the case when $G$ is indecomposable. Therefore, in order to get (iii), by taking into account Lemma~\ref{lm:decompose}, it is enough to show that there is no indecomposable graph $G$ with 
$|V(G)|\geq 4$ such that $J_G$ is Cohen-Macaulay and $\reg(S/J_G)=n-2.$ There is no such graph among those with $4$ vertices. Thus, we may consider $n=|V(G)|\geq 5.$

Let us assume that such a graph does exist.  By \cite[Remark 5.3]{BMS}, since $J_G$ is Cohen-Macaulay, the graph $G$ must have a cut vertex, say $v.$ Since $G$ is indecomposable, $v$ has at least $3$ neighbors in $G.$ If $v$ has at least $4$ neighbors, by Lemma~\ref{lm:4neighbors}, it follows that $\reg(S/J_G)\leq n-3,$ a contradiction. Thus, $v$ has exactly $3$ neighbors, say $w,u_1,u_2.$ 
 Since $G$ is indecomposable and 
$v$ is a cut vertex in $G,$ it follows that none of the edges $\{u_1,u_2\},\{u_1,w\},\{u_2,w\}$ belongs to $E(G).$ On the other hand, as 
$J_G$ is unmixed, the graph $G\setminus v$ has exactly two connected components, say $G_1$ and $G_2.$
 We may assume that $u_1,u_2$ are vertices in $G_1$ and $w$ is a vertex in 
$G_2$. Let $e=\{v,w\}.$ By Proposition~\ref{prop3cond} (a), we have
\begin{equation}\label{eq:remove}
n-2=\reg\frac{S}{J_G}\leq \max\left\{\reg\frac{S}{J_{G\setminus e}},\reg\frac{S}{J_{G\setminus e}:f_e}+1\right\}.
\end{equation}
We observe that $G\setminus e$ has two connected components, namely $G^\prime$ with $V(G^\prime)=V(G_1)\cup\{v\}$ and 
$E(G^\prime)=E(G_1)\cup\{\{u_1,v\},\{u_2,v\}\}$ and $G^{\prime\prime}=G_2.$ Obviously, $G^\prime$ is not a path graph since $G_1$ is 
connected, thus there exists  at least one path connecting $u_1$ and $u_2$ in $G_1$ which does not contain $v$ and is not the edge 
$\{u_1,u_2\}.$ On the other hand, if 
$G_2$ does not consist only of the isolated vertex $w,$ then $G_2$ cannot be a path since the graph $G$ is indecomposable. Let $S^\prime=K[\{x_j,y_j\}: j\in V(G^\prime)]$ and 
$S^{\prime\prime}=K[\{x_j,y_j\}: j\in V(G^{\prime\prime})].$ Then, by Theorem~\ref{regmax}, we have
\[
\reg\frac{S^\prime}{J_{G^\prime}}+\reg\frac{S^{\prime\prime}}{J_{G^{\prime\prime}}}\leq (|V(G^\prime)|-2)+(|V(G^{\prime\prime})|-2)= n-4.
\] Therefore, 
\[
\reg\frac{S}{J_{G\setminus e}}=\reg\frac{S^\prime}{J_{G^\prime}}+\reg\frac{S^{\prime\prime}}{J_{G^{\prime\prime}}}< n-3. 
\] If $G_2$ consist only of the isolated vertex $w,$ then we get
\[
\reg\frac{S}{J_{G\setminus e}}=\reg\frac{S^\prime}{J_{G^\prime}}\leq |V(G^\prime)|-2= n-3. 
\] Thus, in any case we have
\begin{equation}\label{eq:Ge}
\reg\frac{S}{J_{G\setminus e}}\leq n-3.
\end{equation}
Now we look at the term $\reg(S/J_{G\setminus e}:f_e)$ of inequality (\ref{eq:remove}). By Theorem~\ref{propcolon}, it follows that 
$J_{G\setminus e}:f_e=J_{(G\setminus e)_e}$ since there is no path in $G$ connecting $v$ and $w$ except the edge $e=\{v,w\}.$ This is due 
to the fact that when we remove the cut vertex $v$ from $G,$ we get two connected components by the unmixedness of $J_G$. The graph $(G\setminus e)_e$ consists as well of two
connected components, say $H_1$ which contains $v$  and $H_2$ which contains $w.$ If $H_2$ contains some other vertices together with $w,$ 
then $H_2$ cannot be a path since $G$ is indecomposable. The component $H_1$ is not a path since it contains at least the triangle with 
vertices $u_1,u_2,v.$ Therefore, if $S_i=K[\{x_j,y_j\}: j\in V(H_i)]$ for $i=1,2,$ by Theorem~\ref{regmax}, we obtain
\[
\reg\frac{S}{J_{G\setminus e}:f_e}=\reg\frac{S}{J_{(G\setminus e)_e}}=\reg\frac{S_1}{J_{H_1}}+\reg\frac{S_2}{J_{H_2}}\leq (|V(H_1)|-2)+(|V(H_2)|-2)=n-4.
\] This inequality and  (\ref{eq:Ge}) contradicts inequality (\ref{eq:remove}).

It remains to analyze the case when $H_2$ consists of the isolated vertex $w.$ In this case we have
\begin{equation}\label{eq:h1}
\reg\frac{S}{J_{(G\setminus e)_e}}=\reg\frac{S_1}{J_{H_1}}.
\end{equation} We claim that $H_1$ satisfies the conditions of Lemma~\ref{lm:gap}. Clearly, $H_1$ satisfies the condition (b). It 
remains to prove that $J_{H_1}$ is an unmixed ideal because if $H_1$ is decomposable in $u_1$ or $u_2$, then $G$ is decomposable, and this is impossible by our hypotheses on $G$. We first observe that any non-empty cut set of $H_1$ does not contain the vertex $v$ which is a simplicial vertex in $H_1$. Let us assume that there exists a non-empty cut set $\MS\subset V(H_1)$ such that 
$c_{H_1}(\MS)\neq |\MS|+1.$ The set $\MS$ is obviously a cut set for the graph $G$ as well. Moreover,  if $H_1,\ldots,H_{c_{H_1}(\MS)}$ are the connected components of the restriction of $H_1$ to the vertex set $V(H_1)\setminus \MS,$ with $v\in V(H_1)$, then the connected components of the restriction of $G$ to $V(G)\setminus \MS$ are $H_1\cup\{v,w\},H_2,\ldots,H_{c_{H_1}(\MS)}$. Hence
$c_G(\MS)=c_{H_1}(\MS)\neq |\MS|+1,$ a contradiction to the unmixedness of $J_G.$ Since $H_1$ is a graph on $n-1$ vertices which satisfies 
the conditions of Lemma~\ref{lm:gap}, we get $\reg(S_1/J_{H_1})\leq (n-1)-3=n-4. $ Thus, we have proved that
\[
\reg\frac{S}{J_{G\setminus e}:f_e}=\reg\frac{S}{J_{(G\setminus e)_e}}=\reg\frac{S_1}{J_{H_1}}\leq n-4. 
\] This inequality together with   (\ref{eq:Ge}) contradicts inequality (\ref{eq:remove}) and the proof of (ii) $\Rightarrow$ (iii) is completed.

\vspace{.2in}
Finally, we prove the implication (iii) $\Rightarrow$ (i).

As it was observed in the proof of \cite[Proposition 3]{HeRi}, if $G=G_1\cup G_2$ is a decomposable graph, then we have 
$\Tor_i(S/J_{G_1},S/J_{G_2})=0$ for all $i>0.$ In particular, it follows that $J_{G_1}$ and $J_{G_2}$ are transversal ideals in the sense of \cite[Section 2]{Joh}. Now, let $G_1$ be a triangle with the vertices $v_1,v_2,v_3.$ Then $J_{G_1}$ is a Cohen-Macaulay ideal of height 
$2,$ thus it is licci by \cite{PeSz}. If we attach a path $G_2$ to $G_1$ in one of its vertices, say $v_1,$ the resulting graph $G$ is decomposable in $v_1$ and $J_{G_2}$ is a complete intersection ideal. According to \cite[Theorem 2.6]{Joh}, it follows that 
$(J_G)_\mm$ is a licci ideal. We repeat this argument by attaching  a path in the vertex $v_2$ to $G$ and, next another path in the vertex $v_3.$ In each step, we get a licci ideal. 
\end{proof}

An immediate consequence of the above theorem is the following.

\begin{Corollary}\label{cor:bipartite}
Let $G$ be a connected bipartite graph. Then the  ideal $(J_G)_\mm\subset R=S_\mm$ is licci if and only if $G$ is  a path graph, or equivalently, 
$J_G$ is a complete intersection.
\end{Corollary}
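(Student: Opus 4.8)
The plan is to deduce Corollary~\ref{cor:bipartite} directly from Theorem~\ref{thm:licci} by showing that, among the graphs listed in condition (iii), the only bipartite ones are the path graphs. First I would recall the equivalence (i) $\iff$ (iii): the ideal $(J_G)_\mm$ is licci precisely when $G$ is a path graph or a triangle with some paths attached to its vertices. So it suffices to examine which of these graphs are bipartite.

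\smallskip

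The key observation is that a graph is bipartite if and only if it contains no odd cycle. A path graph $P_n$ is a tree, hence contains no cycle at all, so it is trivially bipartite. On the other hand, any graph obtained from a triangle by attaching paths to its vertices contains the triangle itself as an induced subgraph, and the triangle is an odd cycle $C_3$. Therefore such a graph is never bipartite. Consequently, within the family described in Theorem~\ref{thm:licci} (iii), the bipartite members are exactly the path graphs. Combining this with the equivalence (i) $\iff$ (iii) gives that, for a connected bipartite $G$, the ideal $(J_G)_\mm$ is licci if and only if $G$ is a path graph.

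\smallskip

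For the final ``or equivalently'' clause, I would argue that for a path graph $P_n$ the binomial edge ideal $J_{P_n}$ is a complete intersection. Indeed, as recalled in the preliminaries, $J_{P_n}$ is the ideal of adjacent maximal minors $f_{\{i,i+1\}}=x_iy_{i+1}-x_{i+1}y_i$ for $1\leq i\leq n-1$; there are $n-1$ of these, and by the height formula (\ref{eq:ht}) we have $\height J_{P_n}=n-1$, which matches the number of generators, so the generators form a regular sequence and $J_{P_n}$ is a complete intersection. Conversely, a complete intersection is in particular licci (being linked to itself by its defining regular sequence, or simply a complete intersection directly), so this condition feeds back into (i). Thus all three phrasings are equivalent for connected bipartite graphs.

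\smallskip

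I expect the proof to be essentially immediate given Theorem~\ref{thm:licci}; there is no genuine obstacle. The only point requiring a moment's care is the complete-intersection claim, where one must confirm both that the number of minimal generators of $J_{P_n}$ equals its height and that these generators are genuinely a regular sequence rather than merely counting correctly — but this follows from the unmixedness and height computation already available, so even this is routine.
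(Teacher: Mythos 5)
Your argument is correct and is exactly the intended one: the paper gives no written proof, calling the corollary an immediate consequence of Theorem~\ref{thm:licci}, and your reasoning (a triangle forces an odd cycle, so among the graphs in (iii) only paths are bipartite, plus the standard height-equals-number-of-generators argument for the complete intersection clause) fills in precisely that gap.
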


We now turn to the disconnected graphs.

\begin{Proposition}\label{pr:disconnect}
Let $G$ be a graph with the connected components $G_1,\ldots,G_c$ where $c\geq 2.$ Then $(J_G)_\mm\subset R=S_\mm$ is licci if and only if either all the connected components of $G$ are paths or  one component of $G$ is isomorphic to a graph of Figure~\ref{fig:licci} and all the other components are paths. 
\end{Proposition}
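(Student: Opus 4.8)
The plan is to reduce the disconnected case to the connected one through the tensor product structure $S/J_G \cong \bigotimes_{i=1}^c S_i/J_{G_i}$ (tensor over $K$), which holds because the ideals $J_{G_i}$ live in pairwise disjoint sets of variables. This gives two facts I will use throughout: regularity is additive, $\reg(S/J_G)=\sum_{i=1}^c \reg(S_i/J_{G_i})$, and $J_G$ is Cohen--Macaulay if and only if every $J_{G_i}$ is Cohen--Macaulay.

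For the necessity direction, suppose $(J_G)_\mm$ is licci. Then $J_G$ is Cohen--Macaulay by \cite{PeSz}, so each $J_{G_i}$ is Cohen--Macaulay and hence unmixed; by (\ref{eq:ht}) each has height $n_i-1$, whence $\height(J_G)=\sum_i (n_i-1)=n-c$, where $n_i=|V(G_i)|$. Since $\indeg(J_G)=2$, the Huneke--Ulrich inequality (Theorem~\ref{HUineq}) yields $\reg(S/J_G)\geq \height(J_G)-1=n-c-1$. On the other hand, $\reg(S_i/J_{G_i})\leq n_i-1$ for every $i$, with equality forcing $G_i$ to be a path by Theorem~\ref{regmax}. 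Adding these per-component bounds and comparing with the lower bound $n-c-1$ shows that $\sum_i \reg(S_i/J_{G_i})$ equals either $n-c$ or $n-c-1$: in the first case all components are paths, and in the second exactly one component, say $G_1$, satisfies $\reg(S_1/J_{G_1})=n_1-2$ while all others are paths. For that exceptional component, Cohen--Macaulayness together with $n_1-2\leq \reg(S_1/J_{G_1})\leq n_1-1$ places it in statement (ii) of Theorem~\ref{thm:licci}, so $G_1$ is isomorphic to a graph of Figure~\ref{fig:licci}.

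For sufficiency, I recall that a path graph has a complete intersection binomial edge ideal (Corollary~\ref{cor:bipartite}), and that in both listed cases all but at most one component are paths. Since the $J_{G_i}$ are supported on disjoint variables, they are transversal ideals in the sense of \cite{Joh}, with $\Tor_i^S$ between the corresponding quotients vanishing for $i>0$. In the all-paths case $J_G$ is a sum of complete intersections in disjoint variables, hence itself a complete intersection and trivially licci. In the remaining case, the distinguished component $G_1$ is a Figure~\ref{fig:licci} graph, so $(J_{G_1})_\mm$ is licci by Theorem~\ref{thm:licci}, while the other components contribute complete intersection ideals; invoking \cite[Theorem 2.6]{Joh} one component at a time -- exactly as in the proof of the implication (iii)$\Rightarrow$(i) of Theorem~\ref{thm:licci} -- shows that adjoining these transversal complete intersections preserves the licci property, so $(J_G)_\mm$ is licci.

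I expect the delicate point to be the pinning-down step in the necessity direction: one must verify that the Huneke--Ulrich lower bound and the per-component upper bounds together leave no room for two non-path components, since two such components would force $\sum_i \reg(S_i/J_{G_i})\leq n-c-2$, contradicting Theorem~\ref{HUineq}. The additivity of regularity and the exact height computation for Cohen--Macaulay components are routine but must be stated carefully, whereas the sufficiency direction is essentially a re-run of the transversal-ideal argument already used for connected licci graphs.
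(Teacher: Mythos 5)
Your proposal is correct and follows essentially the same route as the paper's proof: the Huneke--Ulrich bound combined with the height computation $\height(J_G)=n-c$ and the additivity of regularity over components pins $\reg(S/J_G)$ to $n-c$ or $n-c-1$, forcing all components to be paths or exactly one exceptional component handled by Theorem~\ref{thm:licci}, while sufficiency uses transversality of the $J_{G_i}$ and \cite[Theorem 2.6]{Joh} exactly as in the paper.
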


\begin{proof} We first remark that, by \cite[Theorem 2.6]{Joh}, if the components of $G$ satisfy the conditions of the proposition, then 
$(J_G)_\mm$ is licci since the ideals $J_{G_i}$ are pairwise transversal by \cite[Lemma~3.1]{HNTT}.

For the converse, let $(J_G)_\mm$ be a licci ideal. Then $J_G$ is Cohen-Macaulay which implies that all the ideals $J_{G_i}$ are Cohen-Macaulay
and 
\[
\reg(S/J_G)\geq \height(J_G)-1=\height(J_{G_1})+\cdots +\height(J_{G_c})-1=n-c-1.
\]
On the other hand, we have
\[
\reg(S/J_G)=\sum_{i=1}^c \reg(S_i/J_{G_i})\leq \sum_{i=1}^c(|V(G_i)|-1)=n-c.
\] Here $S_i=K[\{x_j,y_j\}:j\in V(G_i)]$ for $1\leq i\leq c.$
The above inequalities imply that $\reg(S/J_G)=n-c$ or $\reg(S/J_G)=n-c-1.$ In the first case, it follows that 
$\reg(S_i/J_{G_i})=|V(G_i)|-1$ for all $i,$ which implies that all the connected components of $G$ are path graphs.

Let $\reg(S/J_G)=n-c-1.$ This means that for one of the connected components, say $G_1,$ we have $\reg(S_1/J_{G_1})=|V(G_1)|-2$ and all the other components of $G$ are path graphs. Then, by Theorem~\ref{thm:licci}, it follows that $G_1$ is isomorphic to one of the graphs displayed in Figure~\ref{fig:licci}.
\end{proof}

\section{Licci binomial edge ideals of chordal graphs}
\label{chordal}

In this section we show that if we restrict to chordal graphs, we may relax the condition (ii) in Theorem~\ref{thm:licci}, namely, we may ask that $J_G$ is only unmixed instead of being Cohen-Macaulay. Before proving the main theorem of this section, we need a preparatory result. We first recall that for a graph $G,$ $c(G)$ denotes the number of maximal cliques of $G$, that is, the number of facets of the clique complex $\Delta(G).$

\begin{Lemma}\label{codim1}
Let $G$ be a connected chordal  graph with $n$ vertices. Then $c(G)=n-2$ if and only if  the following conditions hold:
\begin{itemize}
	\item[\emph{(i)}] the maximal cliques of $G$ have at most $3$ vertices; 
	\item[\emph{(ii)}] $G$ has at least one maximal clique with $3$ vertices;
	\item[\emph{(iii)}] $G$ has exactly one maximal clique with $3$ vertices or, for any two triangles $F_1,F_2$ of $\Delta(G),$ there is a sequence of triangles $F_1=F_{i_1},\ldots, F_{i_r}=F_2$ such that for any $1\leq j\leq r-1,$ $F_{i_j}$ and 
	$F_{i_{j+1}}$ share an edge.
\end{itemize}
\end{Lemma}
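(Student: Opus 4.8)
The plan is to translate the equality $c(G)=n-2$ into a single numerical identity relating the sizes of the maximal cliques (the facets of $\Delta(G)$) to the sizes of the separators, and then to read off the three conditions from that identity. Fix a clique tree $T$ of the chordal graph $G$: its nodes are the $c:=c(G)$ facets $F_1,\dots,F_c$, it has $c-1$ edges, and to each tree-edge is attached the separator $S_e=F\cap F'$ of its two endpoints. The running intersection property gives the standard identity $n=\sum_{i=1}^c|F_i|-\sum_e|S_e|$; this can also be obtained by telescoping Dirac's leaf ordering from the excerpt, peeling off one leaf facet at a time. Writing $d_i=|F_i|\ge 2$ (connectivity forces every maximal clique to have at least two vertices) and setting $A=\sum_i(d_i-2)\ge 0$ and $B=\sum_e(|S_e|-1)\ge 0$, this rearranges to $n-c=1+A-B$. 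Thus the whole lemma reduces to analysing when $A-B=1$.

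First I would establish condition (i). Rooting $T$ at a facet $F^\ast$ of maximum size $d^\ast$ and writing $\sigma_i$ for the separator of each non-root $F_i$ to its parent, one gets $A-B=(d^\ast-2)+\sum_{i\ne\ast}(d_i-\sigma_i-1)$, and every summand is nonnegative because $\sigma_i\le d_i-1$ (two distinct maximal cliques cannot contain one another). Hence $A-B\ge d^\ast-2$, so $A-B=1$ forces $d^\ast\le 3$: all maximal cliques have at most three vertices. Condition (ii) is then immediate, since $A-B=1$ gives $A\ge 1$, so some $d_i\ge 3$, i.e.\ $d_i=3$.

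Granting (i), the remaining content is condition (iii), and this is the step I expect to be the main obstacle. Under (i) every facet is an edge or a triangle, so $A$ equals the number $k$ of triangles, while every separator has size $1$ or $2$; a size-$2$ separator is shared by two facets of size $\ge 3$, hence joins two triangles. Therefore $B$ counts the tree-edges of $T$ that are edge-separators, and these span a subforest $T_\Delta$ of $T$ on the $k$ triangle-nodes. Since a forest on $k$ nodes has at most $k-1$ edges, with equality exactly when it is connected, the identity $A-B=1$ is equivalent to $B=k-1$, i.e.\ to $T_\Delta$ being connected. It remains to match this with the triangle-adjacency condition (iii), which asserts that the graph $\mathcal{T}$ with vertices the triangles and edges the pairs sharing a common edge of $G$ is connected (the case $k=1$ being the trivial one-vertex graph). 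If two triangles are $T$-adjacent through an edge-separator they clearly share an edge, so $\mathcal{T}$ is connected whenever $T_\Delta$ is. For the converse I would invoke the running intersection property: if triangles $F,F'$ share an edge $\{a,b\}$, then the facets containing $a$ (resp.\ $b$) form a subtree of $T$, so every facet on the $T$-path from $F$ to $F'$ contains $\{a,b\}$ and is thus a triangle, and every separator along that path contains $\{a,b\}$ and is thus an edge-separator; hence the whole path lies in $T_\Delta$, and $F,F'$ are connected there. Consequently $\mathcal{T}$ connected $\iff T_\Delta$ connected.

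Putting the pieces together yields both directions. If $c(G)=n-2$ then $A-B=1$ gives (i) and (ii), and forces $T_\Delta$ connected, hence $\mathcal{T}$ connected, which is (iii). Conversely, (i) makes $A=k$ the number of triangles, (ii) gives $k\ge 1$, and (iii) makes $\mathcal{T}$, hence $T_\Delta$, connected, so $B=k-1$, $A-B=1$, and $n-c=2$. The only delicate point is the clean bookkeeping connecting the three combinatorial invariants — facet sizes, separator sizes, and triangle adjacency — through the clique tree, and in particular the running-intersection argument identifying $\mathcal{T}$-connectivity with $T_\Delta$-connectivity.
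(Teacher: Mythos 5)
Your argument is correct, but it takes a genuinely different route from the paper's. The paper proves both directions by induction on $n$, working directly with Dirac's leaf order on the facets of $\Delta(G)$: it imports condition (i) from \cite[Proposition 3.1]{RSK}, gets (ii) by noting that otherwise $G$ is a tree with $c(G)=n-1$, and then establishes (iii) (and, for the converse, rules out a minimal counterexample) by a case analysis on whether the last facet $F_c$ in the leaf order is an edge or a triangle and on whether its branch meets it in one vertex or in an edge, deleting a suitable vertex in each case to descend to a smaller graph. You instead globalize everything through a clique tree: the running intersection identity $n=\sum_i|F_i|-\sum_e|S_e|$ converts $c(G)=n-2$ into the single equation $A-B=1$, from which (i) and (ii) fall out via the rooted estimate $A-B\ge d^\ast-2$ and $A\ge 1$, while (iii) becomes the statement that the size-two separators form a spanning tree of the triangle nodes; the induced-subtree property then identifies this with connectivity of the triangle-adjacency graph $\mathcal{T}$. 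Your bookkeeping is sound at each step (in particular $|S_e|\ge 1$ because $G$ is connected, $|S_e|\le |F_i|-1$ because distinct maximal cliques are incomparable, and the path argument showing that two triangles sharing an edge of $G$ are joined in the clique tree by a path consisting entirely of triangles linked by edge-separators). What your approach buys is a non-inductive proof that treats both implications simultaneously and recovers as a byproduct the inequality $c(G)\le n-2$ whenever $G$ contains a triangle --- the fact the paper cites from \cite{RSK} --- at the cost of invoking the standard clique-tree machinery (existence, running intersection, induced-subtree property), which the paper never sets up explicitly but which does follow from the Dirac leaf order it uses.
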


\begin{proof} Let $c(G)=n-2.$ Then (i) follows by \cite[Proposition 3.1]{RSK}. If $G$ has no maximal clique with $3$ vertices, then $G$ is a tree, thus $c(G)=n-1,$ contradiction. Therefore, condition (ii) holds. 

We prove (iii) by induction on $n.$ Since $G$ is chordal, by Dirac's theorem, we may order 
the facets of $\Delta(G)$ as $F_1,\ldots,F_c$ where $c=c(G)$ such that $F_i$ is a leaf of $\langle F_1,\ldots,F_i\rangle$ for all $i.$ 
If $F_c$ is an edge, say $F_c=\{v,w\}$ with $\deg w=1,$ then the graph $G\setminus w$ has $n-1$ vertices and $n-3$ cliques, thus, by induction, it satisfies (iii), and, consequently, $G$ satisfies (iii) as well. 

Let  $F_c$ be a triangle with the vertices $u,v,w$ and assume that $F_j$ with $j<c$ is a branch of $F_c.$ If $F_j\cap F_c$ consists of just one vertex, say $F_j\cap F_c=\{v\},$ then the subgraph $G^\prime=G\setminus\{u,w\}$ has $n-2$ vertices and $n-3$ maximal cliques, 
therefore $G^\prime$ is a tree. This implies that $\Delta(G)$ has exactly one facet with $3$ elements, and condition (iii) is automatically fulfilled. Let us now assume that the branch $F_j$ intersects  $F_c$ in the edge $\{v,w\}.$ We consider the graph 
$G\setminus u.$ This is a graph on $n-1$ vertices with $n-3$ maximal cliques, thus, by the inductive hypothesis, it satisfies (iii).
Let us choose two triangles $F,F^\prime$ in $\Delta(G).$ If they are facets in $\Delta(G\setminus u),$ then they satisfy (iii). Otherwise,
we may assume that $F^\prime=F_c.$ But then, by the inductive hypothesis on $G\setminus u$ there is a sequence of triangles 
$F=F_{i_1},\ldots, F_{i_r}=F_j$ such that for any $1\leq s\leq r-1,$ $F_{i_s}$ and 
	$F_{i_{s+1}}$ share an edge. Then the sequence $F=F_{i_1},\ldots, F_{i_r}=F_j,F_{i_{r+1}}=F_c$ satisfies the required condition for $G$.
	
	For the converse, let us assume that $G$ is a connected chordal  graph with $n$ vertices.\ which satisfies the three  conditions of the statement. By condition (ii) and \cite[Proposition 3.1]{RSK}, it follows that $c(G)\leq n-2.$
	
	Let us assume that there exists a connected chordal graph $G$ satisfying conditions (i)--(iii) and such that $c(G)<n-2$ and choose one with the minimal number of vertices. We consider again the leaf order $F_1,\ldots,F_c$ on the facets of $\Delta(G)$ and take 
	$F_j$ with $j<c$  a branch of $F_c.$ If $F_c$ is an edge, $F_c=\{v,w\}$ with $\deg w=1$, then the graph $G\setminus w$ has $n-1$ vertices
	and satisfies conditions (i)--(iii), thus, by our assumption on $G$ we have $c(G\setminus w)=n-3,$ which implies that $c(G)=n-2,$ contradiction. 
	
	If $F_c$ is a triangle, $F_c=\{u,v,w\},$ and $F_j$ intersects $F_c$ in just one vertex, say $v,$ then we have the following cases.

\emph{	Case 1. }The facet $F_c$ is the only triangle in $\Delta(G).$ Then, the subgraph $G\setminus\{u,w\}$ is a tree on $n-2$ vertices, thus 
	$\Delta(G\setminus\{u,w\})$ has $n-3$ maximal cliques, which implies that $c(G)=n-2, $ contradiction.

\emph{Case 2.} There exists a triangle $F\in \Delta(G\setminus\{u,w\}).$ Then, as $G$ satisfies condition (iii), there exists a triangle 
$F^\prime\neq F_c$ which intersects $F_c$ along an edge. But this is impossible since the branch $F_j$ intersects $F_c$ in one vertex. 

Finally, we have to consider that $F_j$ shares an edge with $F_c,$ say $F_j\cap F_j=\{v,w\}.$ Since $F_j$ is a branch of $F_c,$ there is no other facet $F$ of $\Delta(G)$ with $F\cap F_c=\{u,w\}$ or $F\cap F_c=\{u,v\}.$ Then the graph $G\setminus u$ obviously satisfies conditions (i)--(iii) and has $n-1$ vertices. By the choice of $G,$ we have $c(G\setminus u)=n-3,$ thus $c(G)=n-2,$ contradiction. 
\end{proof}

\begin{Theorem}\label{thm:chordal}
Let $G$ be a connected chordal graph on the vertex set $[n].$ Then the following statements are equivalent:
\begin{itemize}
	\item [\emph{(i)}]  $(J_G)_{\mm}\subset R$ is licci.
	\item [\emph{(ii)}] $J_G$ is  Cohen-Macaulay and $n-2\leq \reg(S/J_G)\leq n-1.$
	\item [\emph{(iii)}] $J_G$ is  unmixed and $n-2\leq \reg(S/J_G)\leq n-1.$
	\item [\emph{(iv)}] $G$ is a path graph or it is isomorphic to a graph depicted in Figure~\ref{fig:licci}.
\end{itemize}
\end{Theorem}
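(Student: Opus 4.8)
The plan is to prove Theorem~\ref{thm:chordal} by establishing the cycle of implications (i)~$\Rightarrow$~(ii)~$\Rightarrow$~(iii)~$\Rightarrow$~(iv)~$\Rightarrow$~(i). Three of these implications come essentially for free from Theorem~\ref{thm:licci}: the implication (i)~$\Rightarrow$~(ii) is identical to the one already proved there, (iv)~$\Rightarrow$~(i) is again exactly the chordal case of Theorem~\ref{thm:licci} (iii)~$\Rightarrow$~(i) since every graph in Figure~\ref{fig:licci} is chordal, and (ii)~$\Rightarrow$~(iii) is trivial because Cohen-Macaulay ideals are unmixed. Thus the entire content of the theorem is concentrated in the single new implication (iii)~$\Rightarrow$~(iv): for a \emph{chordal} graph, unmixedness of $J_G$ together with $\reg(S/J_G)\geq n-2$ already forces $G$ into the list of Figure~\ref{fig:licci}, even without assuming Cohen-Macaulayness a priori.

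To prove (iii)~$\Rightarrow$~(iv), I would first dispose of the case $\reg(S/J_G)=n-1$, which by Theorem~\ref{regmax} means $G$ is a path and we are done. So assume $\reg(S/J_G)=n-2$. The idea is to translate the regularity hypothesis into a statement about the clique complex using Theorem~\ref{NaokiThm} and the chordal bound $\reg(S/J_G)\leq c(G)$ from \cite[Theorem 3.5]{RSK}. Combining $n-2\leq\reg(S/J_G)\leq c(G)$ with \cite[Proposition 3.1]{RSK} (which gives $c(G)\leq n-2$ once the maximal cliques have size at most $3$, or more generally bounds $c(G)$) should pin down $c(G)=n-2$. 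This is precisely the hypothesis of Lemma~\ref{codim1}, so I would invoke that lemma to conclude that the maximal cliques of $G$ have at most $3$ vertices, that there is at least one triangle, and that the triangles are connected through edge-sharing sequences (condition (iii) of Lemma~\ref{codim1}).

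The remaining work is to rule out all configurations except the single-triangle-with-paths graph of Figure~\ref{fig:licci}. Here is where the unmixedness hypothesis does the heavy lifting. If $\Delta(G)$ contained two distinct triangles, then by the edge-sharing connectivity from Lemma~\ref{codim1} there would be two triangles $F_1,F_2$ sharing an edge $\{a,b\}$; I would argue that the shared edge $\{a,b\}$ then gives a cut set (or produces a vertex of degree $\geq 4$, or a vertex with exactly two adjacent neighbors) that violates the unmixedness condition of Corollary~\ref{unmixed}, exactly in the spirit of the obstructions exploited in Lemma~\ref{lm:4neighbors} and Lemma~\ref{lm:gap}. Once $G$ has a unique triangle, every other maximal clique is an edge, so $G$ is a tree with one triangle glued in; unmixedness together with connectivity then forces the triangle to have at most paths hanging off its three vertices, which is precisely Figure~\ref{fig:licci}.

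I expect the main obstacle to be the last paragraph: showing rigorously that a \emph{second} triangle is incompatible with unmixedness. The subtlety is that two edge-sharing triangles form a $K_4$ minus an edge, and I must check that the resulting cut set $\MS$ (taking the two shared vertices, or a carefully chosen separating set) satisfies $c(\MS)\neq|\MS|+1$, contradicting Corollary~\ref{unmixed}; the bookkeeping of connected components after removing $\MS$ in a general chordal graph is where the argument could become delicate. A secondary care point is handling the degenerate boundary cases ($n=3$, or paths of length zero attached to the triangle), which must be checked directly to ensure the list in Figure~\ref{fig:licci} is neither over- nor under-counted.
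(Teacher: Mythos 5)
Your reduction to the single implication (iii) $\Rightarrow$ (iv), the dispatch of the case $\reg(S/J_G)=n-1$ via Theorem~\ref{regmax}, and the idea of pinning down $c(G)=n-2$ so as to invoke Lemma~\ref{codim1} all match the paper (though you skip the case $c(G)=n-1$, where $G$ is a tree and unmixedness forces a path via \cite[Corollary 1.2]{EHH}, and you never make the reduction to \emph{indecomposable} graphs, which the paper needs in order to run its case analysis and to make the final ``tree with one triangle glued in'' step legitimate).

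The genuine gap is in the step you yourself flag as the heavy lifting: the claim that two edge-sharing triangles produce a cut set contradicting unmixedness is simply false. Take $H$ on the vertices $\{a,b,c,d,g\}$ with edges $ab,ac,bc,ad,bd,ag$, i.e.\ the diamond ($K_4$ minus the edge $\{c,d\}$) with a pendant edge attached at $a$. This graph is connected, chordal, indecomposable ($a$ lies in three maximal cliques, so no decomposition at the unique cut vertex $a$ is possible), and contains the two triangles $\{a,b,c\}$ and $\{a,b,d\}$ sharing the edge $\{a,b\}$; yet its only cut sets are $\emptyset$, $\{a\}$ and $\{a,b\}$, with $c(\MS)=|\MS|+1$ in each case (the pendant vertex $g$ supplies the third component after removing $\{a,b\}$), so $J_H$ \emph{is} unmixed by Corollary~\ref{unmixed}. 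What excludes $H$ is not unmixedness but regularity: one checks $\reg(S/J_H)=2=n-3$. So the second triangle cannot be killed by a cut-set argument at the shared edge alone; the regularity hypothesis must be brought back in, and that is exactly what the paper's Case~2 does. There, for a Dirac leaf $F_{n-2}=\{u,v,w\}$ meeting its branch in the edge $\{v,w\}$, the Ohtani splitting $J_G=J_{G_v}\cap(J_{G\setminus v}+(x_v,y_v))$ and the clique-counting estimates $c(G_v)\leq c(G)-q\leq n-4$ and $c(G_v\setminus v)\leq c(G_v)$ from \cite{RSK} force $\reg(S'/J_{G\setminus v})\geq n-2$, hence $G\setminus v$ is a path by Theorem~\ref{regmax}, and only then does unmixedness yield a contradiction through the cut set $\{v,w\}$. (The paper also needs a separate Case~1, where the leaf meets its branch in a single vertex, resolved via Lemma~\ref{codim1}(iii) and Lemma~\ref{lm:4neighbors}; your sketch has no counterpart to this.) As written, your plan would stall at the second-triangle step, and no amount of component bookkeeping will rescue it without re-injecting the regularity hypothesis.
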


\begin{proof}
We have to prove only the implication (iii) $\Rightarrow$ (iv). Let $J_G$ be unmixed and let $\reg(S/J_G)= n-1$. Then, by 
Theorem~\ref{regmax}, $G$ is a path graph. Let us now discuss the case when $\reg(S/J_G)= n-2.$ By \cite[Theorem 3.5]{RSK}, we have $\reg (S/J_G)\leq c(G).$ Thus, we get $c(G)\geq n-2.$ If $c(G)=n-1,$ then $G$ is a tree, but since $J_G$ is unmixed, by \cite[Corollary 1.2]{EHH}, it follows that $G$ is a path graph.

As in the proof of Theorem~\ref{thm:licci}, it is enough to show that there is no indecomposable chordal graph with $n\geq 4$ vertices which satisfies the conditions $J_G$  unmixed and  $\reg(S/J_G)=c(G)=n-2.$ Let us assume that such a  graph $G$ does exists. 

By Theorem~\ref{NaokiThm}, it follows that the maximal cliques of $G$ have at most three vertices.
As $G$ is a chordal graph, by Dirac's theorem, 
 it follows that the facets of the clique complex $\Delta(G)$ of $G$ have a leaf order, say
$F_1,\ldots,F_{n-2}.$ In particular, this means that $F_{n-2}$ has a branch. Let $F_j$ with $j\leq n-3$ be a branch of $F_{n-2}.$

\textbf{Case 1.} Assume that the intersection $F_j\cap F_{n-2}$ consists of only one vertex of $G,$ say $F_j\cap F_{n-2}=\{v\}.$ If $F_{n-2}$ has only the branch $F_j$, then $G$ is decomposable which contradicts our assumption on $G.$
Thus $F_{n-2}$ has $q\geq 2$ branches, say  $F_{j_1},\ldots,F_{j_q}.$ Then, as $J_G$ is unmixed, it follows that 
the induced subgraph of $G\setminus v$ on the vertex set $\bigcup_{i=1}^q F_{j_i}\setminus v$ is connected. This implies that all the facets $F_{j_1},\ldots,F_{j_q}$ are triangles.  If $F_{n-2}$ is also a triangle, we  get a contradiction to Lemma~\ref{codim1}. Thus, $F_{n-2}$ must be an edge and then $v$ is a cut vertex of $G$ with $\deg_G(v)\geq 4.$ By Lemma~\ref{lm:4neighbors}, it follows that 
$\reg(S/J_G)\leq n-3,$ a contradiction. \\
\textbf{Case 2.} Assume that the intersection $F_j\cap F_{n-2}$ consists of two vertices  of $G,$ say $F_j\cap F_{n-2}=\{v,w\}.$ In this case, $F_{n-2}$ is a triangle with the vertices $u,v,w.$
Since  $J_G$ is unmixed, there must be other facets of $\Delta(G)$ whose intersection with $F_{n-2}$ is contained in $\{v,w\}$ or equal to $\{v,w\}$. 
Let $F_{j_1},\ldots,F_{j_q}$ with $q\geq 2$ and $j_q=j$ be the facets of $\Delta(G)$ with $F_{j_s}\cap F_{n-2}\subseteq \{v,w\}$ for $1\leq s\leq q.$  As $v$ is not a simplicial vertex in $G,$ we may apply  again  \cite[Lemma 4.8]{Ohtani} and  get 
\[
J_G=J_{G_v} \cap (J_{G\setminus v}+(x_v,y_v)).
\]
We use   the following exact sequence of $S$--modules:
\[
0\to \frac{S}{J_G} \to \frac{S}{J_{G_v}}\bigoplus \frac{S}{J_{G\setminus v}+(x_v,y_v)}\to \frac{S}{J_{G_v\setminus v}+(x_v,y_v)}\to 0.
\] to derive that 
\begin{equation}\label{eq:final}
\reg\frac{S}{J_G}\leq \max\{\reg \frac{S}{J_{G_v}}, \reg \frac{S}{J_{G\setminus v}+(x_v,y_v)}, \reg \frac{S}{J_{G_v\setminus v}+(x_v,y_v)}+1\}.
\end{equation}

By \cite[Lemma 3.4]{RSK}, it follows that $c(G_v)\leq c(G)-q$, hence, by our assumption on $q,$ we get $c(G_v)\leq n-4.$ On the other hand, by \cite[Lemma 3.3]{RSK}, we have 
$c(G_v\setminus v)\leq c(G_v)$, thus $c(G_v\setminus v)\leq n-4.$ In particular, it follows that 
\begin{equation}\label{ineq}
\reg(S/J_{G_v})\leq n-4 \text{ and } \reg(S/J_{G_v\setminus v}+(x_v,y_v))\leq n-4.
\end{equation}
Therefore, by (\ref{eq:final}), we must have 
\[
\reg \frac{S}{J_{G\setminus v}+(x_v,y_v)}=\reg \frac{S'}{J_{G\setminus v}}\geq n-2.
\] where $S^\prime=K[\{x_j,y_j\}:j\in [n]\setminus\{v\}].$ As $G\setminus v$ has $n-1$ vertices, it follows by Theorem~\ref{regmax} that $G\setminus v$ is a path graph. But in this case, $\MS=\{v,w\}$ is a cut set of $G$ because $G$ is indecomposable. In addition,  the restriction of 
$G$ to the vertex set $[n]\setminus \{v,w\}$ has two connected components, which is a contradiction to the unmixedness of $J_G.$ 
\end{proof}

\end{document}